\newtheorem{theorem}{Theorem}
\newtheorem{definition}{Definition}
\newtheorem{lemma}{Lemma}
\title{\textbf{Large Deviation Principle for Some Measure-Valued Processes} }
\author{Parisa Fatheddin and Jie Xiong\thanks{Research supported partially by
NSF DMS-0906907.}\\
\emph{University of Tennessee}}
\date{}
\begin{document}
\newtheorem{example}[theorem]{Example}
\newtheorem{cor}[theorem]{Corollary}
\newtheorem{notation}[theorem]{Notation}
\newtheorem{notations}[theorem]{Notations}
\newtheorem{claim}[theorem]{Claim}
\newtheorem{mtheorem}[theorem]{Meta-Theorem}
\newtheorem{prop}[theorem]{Proposition}
\newtheorem{rem}[theorem]{Remark}
\newtheorem{conj}[theorem]{Conjecture}
\newtheorem{rems}[theorem]{Remarks}

\def\al{{\alpha}}\def\be{{\beta}}\def\de{{\delta}}
\def\ep{{\epsilon}}\def\ga{{\gamma}}\def\ka{{\kappa}}
\def\la{{\lambda}}\def\om{{\omega}}\def\si{{\sigma}}
\def\ze{{\zeta}}\def\De{{\Delta}}
\def\Ga{{\Gamma}}
\def\La{{\Lambda}}\def\Om{{\Omega}}
\def\th{{\theta}}\def\Th{{\Theta}}
\def\<{\left<}\def\>{\right>}\def\({\left(}\def\){\right)}

\newfam\msbmfam\font\tenmsbm=msbm10\textfont
\msbmfam=\tenmsbm\font\sevenmsbm=msbm7

\scriptfont\msbmfam=\sevenmsbm\def\bb#1{{\fam\msbmfam #1}}

\def\AA{\bb A}\def\BB{\bb B}\def\CC{\bb C}
\def\EE{\bb E}\def\GG{\bb G}\def\HH{\bb H}\def\KK{\bb K}
\def\LL{\bb L}\def\NN{\bb N}\def\PP{\bb P}\def\QQ{\bb Q}
\def\RR{\bb R}\def\TT{\bb T}\def\WW{\bb W}\def\ZZ{\bb Z}
\def\cA{{\cal A}}\def\cB{{\cal B}}\def\cL{{\cal L}}
\def\cD{{\cal D}}\def\cF{{\cal F}}\def\cG{{\cal G}}
\def\cH{{\cal H}}\def\cI{{\cal I}}\def\cJ{{\cal J}}
\def\cL{{\cal L}}\def\cM{{\cal M}}
\def\cP{{\cal P}}\def\cS{{\cal S}}\def\cT{{\cal T}}\def\cU{{\cal U}}\def\cV{{\cal V}}
\def\cW{{\cal W}}\def\cX{{\cal X}}\def\cY{{\cal Y}}\def\cZ{{\cal Z}}

\def\us{{\underline{s}}}
\def\ut{{\underline{t}}}
\def\uu{{\underline{u}}}

\def\goto{{\rightarrow}}

\maketitle

\begin{abstract}
 We establish a large deviation principle for the solutions of a class of
 stochastic partial differential equations with non-Lipschitz continuous
 coefficients. As an application, the large deviation principle is derived for
 super-Brownian motion and Fleming-Viot process.
 \end{abstract}

\noindent {\sc Key words:} Large deviation principle, stochastic
partial differential
 equation, Fleming-Viot process, super-Brownian motion.

\noindent {\sc MSC 2010 subject classifications:} Primary 60F10;
Secondary: 60H15, 60J68.

 \section{Introduction}

 Measure-Valued processes (MVP) arise from many fields of applications including population growth models
 and genetics. We refer the reader to the books of Dawson \cite{Daw},
 Etheridge \cite{Eth}, Perkins \cite{Per}, and Li \cite{Li} for an introduction to this topic.
 Two of the most studied measure-valued processes are super-Brownian motion (SBM) and
 Fleming-Viot process (FVP). An interesting problem concerns the limiting behavior of these processes
 when the branching rate (for SBM) or the mutation rate (for FVP) $\epsilon$, tends to zero. It is easy to see that the
 measure-valued processes, denoted by $\mu_{t}^{\epsilon}$, converge to a deterministic measure-valued process $\mu_{t}^{0}$,
 and it is desirable to study this rate of convergence.

 Large deviation principle (LDP) is a very useful tool for the study of convergence rate. Roughly speaking, the goal of the
 LDP is to determine the rate $R(\delta) >0$, for any $\delta >0 $ such that as $\epsilon \rightarrow 0$,
 \begin{equation}\label{1.1}
 P\left( \rho \left( \mu^{\epsilon}, \mu^{0}\right)> \delta \right) \approx \exp\left(-\epsilon^{-1} R(\delta)\right),
 \end{equation}
 for a suitable distance, $\rho$  in $C\left([0,1]; \mathcal{M}_{\beta}(\mathbb{R})\right)$,
 the state space of the MVP, where $\mathcal{M}_{\beta}(\mathbb{R})$ is the set of $\sigma$-finite measures $\mu$ on
$\mathbb{R}$ such that
\begin{equation}\label{e}
\int e^{-\beta|x|}d\mu(dx)< \infty.
\end{equation}
 We refer the reader to the books of Dembo and
 Zeitouni \cite{Dem}, Deuschel and Stroock \cite{Deu}, and Dupuis
 and Ellis \cite{Dup} for more background on this subject.

 LDP for MVP has been studied by many authors. Fleischmann and Kaj \cite{FK} proved the LDP for SBM for a fixed time
 $t$. Later on, sample path LDP for SBM was derived independently by Fleischmann $et\;al$ \cite{FGK},
  and Schied \cite{Sch} while the rate function was expressed by a variational form. To obtain an explicit
  expression for the rate function, \cite{FGK} assumes a {\em local
  blow-up condition} which is not proven. On the other hand,
  \cite{Sch} obtains the explicit expression of the rate function
  when the term representing the movements of the particles also
  tends to zero. The local
  blow-up condition of \cite{FGK} was recently removed by Xiang for
  SBM with finite and infinite initial
  measure,\cite{Xia2},\cite{Xia} respectively,
   and the same explicit expression was established.
  Fleischmann and Xiong \cite{FX} proved an LDP for catalytic SBM with a single point catalyst.
  The successes of the LDP for SBM depend on the branching property of this process. This property
  implies the weak LDP directly, and hence the problem diminishes to showing the exponential tightness of SBM,
  which yields the LDP, and identifying its rate function.

    Since FVP does not possess the branching property, the derivation of LDP depends on new ideas. Dawson and
    Feng \cite{DF1}, \cite{DF2}, and Feng and Xiong \cite{FenX} considered the LDP for FVP
    when the mutation is neutral. In [4], LDP was shown to hold when the process remains in the interior of the simplex,
     and in [3] the authors proved that if the process starts from the interior, it will not reach the boundary.
     On the other hand, authors in \cite{FenX} focused on the singular case when the process starts from the boundary.
     For non-neutral case, Xiang and Zhang \cite{XZ}
     derived an LDP for FVP when the mutation operator also tends to zero by projecting to the finite dimensional case.

  The goal of this paper is to study LDP for MVP, with SBM and FVP as special cases. Comparing our LDP for
  SBM with that obtained in \cite{FGK}, \cite{Sch} and \cite{Xia}, the rate function has the same explicit
  representation, but the approach is different. Our LDP for FVP contributes to the literature, by not requiring the neutrality and vanishing of mutation.

  \section{Notations and Main Results}

 Let $\left(\Omega, \mathcal{F}, P, \mathcal{F}_{t} \right)$ be a stochastic basis satisfying the usual
 conditions of right continuity and completeness. Suppose $W$ is an $\mathcal{F}_{t}$-adapted space-time white noise random measure on
 $\mathbb{R}_{+} \times U$ with intensity measure $ds\lambda(da)$, where
  $\left(U, \mathcal{U}, \lambda\right)$ is a measure space.

   We consider the following stochastic partial differential equation (SPDE): for $\epsilon > 0$, $t\in [0,1]$, and $y\in \mathbb{R}$,
 \begin{equation}\label{2.1}
 u^{\epsilon}_{t}(y)= F(y) + \sqrt{\epsilon} \displaystyle
 \int_{0}^{t} \int_{U} G\left(a,y,u_{s}^{\epsilon}(y)\right) W(dsda) + \displaystyle \int_{0}^{t} \frac{1}{2}\Delta u_{s}^{\epsilon}(y)ds
 \end{equation}
 where $F$ is a function on $\mathbb{R}$ and $G: U\times \mathbb{R}^{2} \rightarrow \mathbb{R}$ satisfies the following conditions:
 there exists a constant $K>0$ such that for any $u_{1},u_{2}, u, y \in \mathbb{R}$,
 \begin{equation}\label{eq0502a}
 \displaystyle \int_{U} \left|G(a,y,u_{1})-G(a,y,u_{2})\right|^{2} \lambda(da) \leq K\left|u_{1}-u_{2}\right|
 \end{equation}
and \begin{equation}\label{eq0502b} \displaystyle \int_{U}
\left|G(a,y,u)\right|^{2} \lambda(da) \leq K\(1+|u|^2\).
\end{equation}
This SPDE was studied by Xiong \cite{Xio} in a Hilbert space denoted
by him as $\chi_{0}$. To study the LDP for the random field
$\left\{u_{t}^{\epsilon}(y)\right\}$, we need to consider the SPDE
in a certain H$\ddot{o}$lder continuous space; that is, we study the
regularity of the solution. For this purpose the spaces for the
solution are introduced. Let $\{\phi_{j}\}_{j\geq 1}$ be a complete
orthonormal system (CONS) of $L^{2}(U,\mathcal{U}, \lambda)$ and
define a system of stochastic processes as,
\begin{equation}\label{2.4}
B_{t}^{j} = \displaystyle \int_{0}^{t} \int_{U} \phi_{j}(a)
W(dsda),\qquad j=1,2,\cdots.
\end{equation}
which by L$\acute{e}$vy's characterization of Brownian motions, is a sequence of
independent Brownian motions. Denote the measurable space,
\begin{equation*}
(S,\mathbb{S}) := \left(\mathcal{C}([0,1]; \mathbb{R}^{\infty}), \mathbb{B}(\mathcal{C}([0,1];
\mathbb{R}^{\infty}))\right).
\end{equation*}
For any $\alpha \in (0,1)$ and $0<\beta\in\RR$, let the space,
$\mathbb{B}_{\alpha,\beta}$ be the collection of all functions $f:
\mathbb{R} \rightarrow \mathbb{R}$ such that for all $m\in \NN$,
\begin{equation}\label{eq3}
\left|f(y_{1})-f(y_{2})\right| \leq K e^{\beta m}
 \left|y_{1}-y_{2}\right|^{\alpha},
\hspace{.5cm} \forall |y_{1}|,\ |y_{2}|\le m,
 \end{equation} and \begin{equation}\label{eq0405a} \left|f(y)\right| \leq K
e^{\beta|y|}, \qquad \qquad \forall \ y\in \mathbb{R}.\end{equation}

We define the metric on $\BB_{\al,\be}$ as follows:
\[d_{\al,\be}(u,v)=\sum^\infty_{m=1}2^{-m}\(\|u-v\|_{m,\al,\be}\wedge
1\),\qquad u,\;v\in\BB_{\al,\be}\] where
\[\|u\|_{m,\al,\be}=\sup_{x\in\RR}e^{-\be|x|}|u(x)|+\sup_{y_{1}\neq y_{2}}\frac{|u(y_{1})-u(y_{2})|}{|y_{1}-y_{2}|^\al}e^{-\be m}.\]

Notice that SPDE (\ref{2.1}) can be rewritten as an SPDE driven by Brownian motions, $\{B_{t}^{j}\}$ as follows:
\begin{equation}\label{BSPDE}
u_{t}^{\epsilon}(y)= F(y) + \sqrt{\epsilon} \sum_{j} \int_{0}^{t}
G_j(y,u_{s}^{\epsilon}(y))dB_{s}^{j} +\displaystyle \int_{0}^{t}
\frac{1}{2}\Delta u_{s}^{\epsilon}(y)ds,
\end{equation}
where
\begin{equation}
G_j(y,u)=\int_U G(a,y,u)\phi_j(a)\la(da),\qquad j=1,2,\cdots.
\end{equation}

In this paper, we let $\beta_{0} \in (0,\beta)$.
\begin{theorem}\label{thm1}
For any $\alpha \in (0,\frac{1}{2})$, there exists a measurable map,
$g^{\epsilon}: \mathbb{B}_{\alpha,\beta_{0}} \times \mathbb{S}
\rightarrow \mathcal{C}([0,1]; \mathbb{B}_{\alpha,\beta})$ such that
for $F\in \mathbb{B}_{\alpha,\beta_{0}}$, $u^{\epsilon} =
g^{\epsilon}(F, \sqrt{\epsilon} B)$ is the unique mild solution of
(\ref{2.1}).
\end{theorem}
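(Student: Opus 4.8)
The plan is to construct the solution map by a Picard iteration carried out pathwise, that is, for each fixed realization of the driving noise $\sqrt{\epsilon}B$, and to keep track simultaneously of measurability in $(F,\sqrt{\epsilon}B)$ and of the spatial Hölder regularity in the scale $\BB_{\al,\be}$. First I would rewrite the mild form of \eqref{2.1}: using the heat semigroup $P_t$ on $\RR$,
\begin{equation}\label{eq:mild}
u^\epsilon_t(y)=P_tF(y)+\sqrt{\epsilon}\sum_j\int_0^t\int_{\RR}p_{t-s}(y-z)G_j(z,u^\epsilon_s(z))\,dz\,dB^j_s,
\end{equation}
where $p_t$ is the Gaussian kernel. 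Existence and uniqueness of a mild solution in Xiong's Hilbert space $\chi_0$ is already available from \cite{Xio}; what is new here is the regularity statement and the representation as a measurable deterministic functional of $(F,\sqrt{\epsilon}B)$. So the first step is to set up the Picard scheme $u^{(0)}_t(y)=P_tF(y)$ and $u^{(n+1)}$ defined by plugging $u^{(n)}$ into the stochastic-convolution term of \eqref{eq:mild}; each $u^{(n)}$ is manifestly a measurable function of $(F,\sqrt{\epsilon}B)$, and the limit, if it exists in $\cC([0,1];\BB_{\al,\be})$, inherits measurability.

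The core estimate is a moment bound on spatial increments of the stochastic convolution. I would apply the Burkholder–Davis–Gundy inequality and \eqref{eq0502b} (in its $G_j$ form, $\sum_j|G_j(y,u)|^2\le K(1+|u|^2)$, which follows from Parseval) to get, for even $p$ and $|y_1|,|y_2|\le m$,
\begin{equation}\label{eq:incr}
\EE\bigl|V_t(y_1)-V_t(y_2)\bigr|^p\le C_p\Bigl(\int_0^t\!\!\int_\RR|p_{t-s}(y_1-z)-p_{t-s}(y_2-z)|^2\,e^{2\be|z|}(1+\sup_z e^{-2\be|z|}|u_s(z)|^2)\,dz\,ds\Bigr)^{p/2},
\end{equation}
where $V$ denotes the convolution term; the standard heat-kernel bound $\int_0^t\!\!\int_\RR|p_{t-s}(y_1-z)-p_{t-s}(y_2-z)|^2dz\,ds\le C|y_1-y_2|^{1}$ in one space dimension, combined with an interpolation, gives an exponent $|y_1-y_2|^{p(1-\gamma)/2}$ for any $\gamma>0$, while the weight $e^{2\be|z|}$ is controlled on the relevant region since $p_{t-s}(y_i-z)$ decays Gaussianly and $|y_i|\le m$ (this is exactly where the $e^{\be m}$ factor in \eqref{eq3} and the gap $\be_0<\be$ are used — the initial datum lives at level $\be_0$, and the extra room up to $\be$ absorbs the noise-generated growth). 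A parallel and easier bound handles time increments and the pointwise growth $|u_t(y)|\le Ke^{\be|y|}$. Then Kolmogorov's continuity criterion, applied on each ball $\{|y|\le m\}$ and patched via the metric $d_{\al,\be}$, yields that $V_t\in\BB_{\al,\be}$ with the claimed Hölder exponent $\al<1/2$ (taking $p$ large so that $p(1-\gamma)/2-1>$ the required margin), and that $t\mapsto u_t$ is continuous into $\BB_{\al,\be}$.

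With these estimates in hand, the contraction step uses \eqref{eq0502a}: the difference of two Picard iterates satisfies a bound of the form $\EE\|u^{(n+1)}_t-u^{(n)}_t\|^p_{m,\al,\be}\le C\int_0^t\EE\|u^{(n)}_s-u^{(n-1)}_s\|_{m,\al,\be}\,ds$ after using that $|u_1-u_2|\le|u_1-u_2|^{1/2}(\ldots)^{1/2}$ and the Lipschitz-in-$L^2$ bound \eqref{eq0502a} on $G$ — note the square root, which is why the coefficients are only $1/2$-Hölder and why one cannot get a clean geometric contraction but instead must iterate the Gronwall-type inequality, obtaining summability of $\sum_n\|u^{(n+1)}-u^{(n)}\|$ in $\cC([0,1];\BB_{\al,\be})$ in probability, hence a.s. along a subsequence, hence a limit $u^\epsilon$. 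Uniqueness in $\cC([0,1];\BB_{\al,\be})$ follows from the same Gronwall argument, and consistency with the $\chi_0$-solution of \cite{Xio} from uniqueness there. Finally, to extract the \emph{deterministic} measurable map $g^\epsilon$ I would invoke a Yamada–Watanabe-type argument: pathwise uniqueness plus existence of a solution on some space gives strong existence, i.e. the solution is a measurable functional of $(F,\sqrt{\epsilon}B)$ on the canonical path space $\SS$; concretely, since each $u^{(n)}$ is an explicit measurable function of $(F,\sqrt{\epsilon}B)$ and the convergence is in probability uniformly, the limit defines $g^\epsilon$ off a null set, which one removes by setting $g^\epsilon=0$ there.

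The main obstacle I anticipate is the interaction between the exponential spatial weights and the stochastic convolution: one must show the weighted seminorms $\|u^{(n)}_t\|_{m,\al,\be}$ do not blow up along the iteration, uniformly in $n$, despite the noise term potentially amplifying growth at infinity. This is controlled precisely by the strict inequality $\be_0<\be$ (a priori room for growth) together with the Gaussian decay of $p_t$ that beats $e^{\be|z|}$ on the integration region; making this quantitative — i.e., choosing $p$, $\gamma$, and tracking the $m$-dependent constants so that Kolmogorov's criterion closes with exponent $\al<1/2$ — is the technical heart of the argument. The half-Hölder coefficient condition \eqref{eq0502a} is the second delicate point, handled as indicated by iterating a linear Gronwall inequality rather than invoking a fixed-point contraction directly.
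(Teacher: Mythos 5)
Your regularity estimates are essentially the ones the paper uses: Burkholder--Davis--Gundy plus the growth condition \eqref{eq0502b} for the spatial and temporal increments of the stochastic convolution, weighted heat-kernel bounds in which the Gaussian decay absorbs $e^{2\beta_1|x|}$, and a Kolmogorov criterion applied ball-by-ball in $\{|y|\le m\}$ and patched through the metric $d_{\al,\be}$ using the gap $\beta_0<\beta_1<\beta$ (this patching is exactly the content of the paper's refined Kolmogorov lemma, Lemma~\ref{lem0405a}, with the rescaling $y'=y/m$). That part of your outline is sound and matches Lemmas \ref{lem0420a} and \ref{lemv}.

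The genuine gap is in your existence--uniqueness step. The recursion you propose for the Picard iterates does not close under \eqref{eq0502a}: since $\int_U|G(a,y,u_1)-G(a,y,u_2)|^2\lambda(da)\le K|u_1-u_2|$ (first power, not square), BDG gives a bound of the schematic form $a_{n+1}(t)\le C\int_0^t (t-s)^{-1/2}\,a_n(s)^{1/2}\,ds$ for $a_n(t)=\sup_x e^{-2\beta|x|}\EE|u^{(n)}_t(x)-u^{(n-1)}_t(x)|^2$. This map has nonzero fixed points (e.g.\ $a\equiv C't$ type profiles), so iterating it yields neither summability of $\sum_n\|u^{(n+1)}-u^{(n)}\|$ nor uniqueness; ``the same Gronwall argument'' for uniqueness fails for the same reason. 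This is the standard obstruction for H\"older-$\tfrac12$ coefficients: pathwise uniqueness requires the Yamada--Watanabe device (a sequence of smooth approximations to $|x|$), and existence is obtained by approximation and tightness rather than by a fixed-point contraction. The paper does not attempt any of this: it imports existence, pathwise uniqueness, and the strong-solution representation $u^\ep=g^\ep(F,\sqrt{\ep}B)$ wholesale from \cite{Xio} (where the SPDE is solved in the Hilbert space $\chi_0$), and the entire proof of Theorem~\ref{thm1} given in Section~3 is the regularity upgrade showing that this already-existing solution in fact lives in $\mathcal{C}([0,1];\mathbb{B}_{\al,\be})$. If you want a self-contained argument you would need to reproduce the Yamada--Watanabe analysis of \cite{Xio}, not a Picard contraction.
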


In order to study the LDP of the process $u_{t}^{\epsilon}$, one needs to consider the
controlled version of (\ref{2.1}) with the noise replaced by the control. For any $h \in L^{2}([0,1]\times
U, ds\lambda(da))$, this version has the following deterministic form,
\begin{equation}\label{2.6}
u_{t}(y) = F(y) + \displaystyle \int_{0}^{t} \int_{U}
G(a,y,u_{s}(y))h_{s}(a)\lambda(da)ds + \displaystyle \int_{0}^{t}
\frac{1}{2} \Delta u_{s}(y) ds
\end{equation}
Because of the non-Lipschitz continuity of the coefficients, the topology of the state space,
 $\mathcal{C}\left([0,1];\mathbb{B}_{\alpha,\beta}\right)$, needs to be modified.
\begin{definition}
We say that $u,v \in \mathcal{C}([0,1]; \mathbb{B}_{\alpha,\beta})$
are equivalent, denoted by $u\sim v$, if there exists an $h \in
L^{2}([0,1] \times U, ds \lambda(da))$ such that both $u,v$ are
solutions to (\ref{2.6}). If $u$ is not a solution to equation
(\ref{2.6}) for a suitable $h$, then $u$ belongs to the equivalent
class consisting of itself only.
\end{definition}
From this point on, we establish the LDP of $u^{\epsilon}$ in the quotient space of
 $\mathcal{C}([0,1]; \mathbb{B}_{\alpha,\beta})$ under the equivalence relation $\sim$ given above.
 We abuse the notation a bit by using the same notation for this quotient space. Note that when $h=0$, equation (\ref{2.6}) has a unique solution, $u_{t}^{0}(y)$. Therefore, this modification of topology does not affect the exponential rate of the form (\ref{1.1}) derived from the LDP at a neighborhood of $u^{0}_{.}$.

 Let $\gamma$ be a map from $\mathbb{B}_{\alpha,\beta_{0}}\times L^{2}([0,1] \times U, ds\lambda(da))$ to $\mathcal{C}([0,1];\mathbb{B}_{\alpha,\beta})$
 whose domain consists of $(F,h)$ such that (\ref{2.6}) has a solution, and denote the equivalence class of the solution as $u=\gamma(F,h)$.

 \begin{theorem}
 Suppose $F \in \mathbb{B}_{\alpha,\beta_{0}}$, then the family $\{u^{\epsilon}\}$ satisfies the LDP in $\mathcal{C}([0,1]; \mathbb{B}_{\alpha, \beta})$
 with rate function,
  \begin{equation}\label{2.7}
I(u) =\left\{\begin{array}{ll}  \frac{1}{2} \inf\left\{ \displaystyle
\int_{0}^{1} \int_{U} |h_{s}(a)|^{2} \lambda(da)ds : u = \gamma\left(F,h\right)\right\}
 &\exists h\; \mbox{s.t. } u=\gamma\left(F,h\right)\\
\infty &\mbox{otherwise.}
\end{array}\right.
\end{equation}
 \end{theorem}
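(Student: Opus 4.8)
The plan is to prove the equivalent Laplace principle via the weak convergence method of Budhiraja and Dupuis (see also \cite{Dup}). By Theorem \ref{thm1} we have the measurable solution map $g^\epsilon$ with $u^\epsilon=g^\epsilon(F,\sqrt\epsilon B)$, and by construction of $\gamma$ the skeleton equation (\ref{2.6}) is solved, up to the equivalence relation $\sim$, by $\gamma(F,h)$. Using the variational representation for exponential functionals of the space-time white noise $W$, it then suffices to verify two conditions. First, for every $M<\infty$ the level set
\[
K_M:=\Big\{\gamma(F,h):\ \tfrac12\int_0^1\!\int_U|h_s(a)|^2\la(da)\,ds\le M\Big\}
\]
is a compact subset of $\mathcal{C}([0,1];\BB_{\al,\be})$. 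Second, whenever $\{h^\epsilon\}$ are $\mathcal{F}_t$-predictable controls with $\sup_\epsilon\tfrac12\int_0^1\!\int_U|h^\epsilon_s(a)|^2\la(da)\,ds\le M$ a.s., and $h^\epsilon\to h$ in distribution in the weak topology of the $L^2([0,1]\times U,ds\la(da))$-ball of radius $\sqrt{2M}$, one has $g^\epsilon\big(F,\sqrt\epsilon B+\int_0^\cdot\!\int_U h^\epsilon_s(a)\la(da)\,ds\big)\to\gamma(F,h)$ in distribution in $\mathcal{C}([0,1];\BB_{\al,\be})$. Granting these, the rate function is $I(u)=\inf\{\tfrac12\|h\|_{L^2}^2:\ u=\gamma(F,h)\}$, that is, exactly (\ref{2.7}).

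The heart of the argument is the second condition. By Girsanov's theorem the process $v^\epsilon:=g^\epsilon\big(F,\sqrt\epsilon B+\int_0^\cdot\!\int_U h^\epsilon_s(a)\la(da)\,ds\big)$ is the unique mild solution of the controlled stochastic equation
\[
v^\epsilon_t(y)=F(y)+\sqrt\epsilon\int_0^t\!\int_U G(a,y,v^\epsilon_s(y))W(ds\,da)+\int_0^t\!\int_U G(a,y,v^\epsilon_s(y))h^\epsilon_s(a)\la(da)\,ds+\int_0^t\tfrac12\De v^\epsilon_s(y)\,ds .
\]
First I would prove a priori estimates, uniform in $\epsilon\in(0,1]$ and over admissible controls: using the growth bound (\ref{eq0502b}), Burkholder--Davis--Gundy, Cauchy--Schwarz to absorb $h^\epsilon$ into the energy budget $M$, and the smoothing of the heat semigroup, one bounds moments $\EE\sup_{t\le1}\|v^\epsilon_t\|_{m,\al,\be}^p$ and, by a Kolmogorov-type argument using $\al<\tfrac12$, the time-increments of $v^\epsilon$ in $\BB_{\al,\be}$. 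This yields tightness of $\{(v^\epsilon,h^\epsilon)\}$ in the product space. Then I would identify the limits: along a weakly convergent subsequence, realized on one probability space by Skorokhod, the stochastic integral term has quadratic variation $O(\epsilon)$ and hence vanishes, the Laplacian term passes to the limit by continuity, and the drift term must be shown to converge to $\int_0^\cdot\!\int_U G(a,y,v_s(y))h_s(a)\la(da)\,ds$. Combined with pathwise uniqueness for the skeleton equation --- proven by a Yamada--Watanabe argument based on the modulus (\ref{eq0502a}), which only identifies the solution in the quotient, matching the definition of $\sim$ --- this gives $v=\gamma(F,h)$ and upgrades subsequential convergence to convergence of the full sequence.

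The first condition is the $\epsilon=0$ specialization of the same analysis. If $h^n$ converges weakly in $L^2$ with $\tfrac12\|h^n\|_{L^2}^2\le M$, the now deterministic a priori bounds give precompactness of $\{\gamma(F,h^n)\}$ in $\mathcal{C}([0,1];\BB_{\al,\be})$, and the same limit-identification step gives $\gamma(F,h^n)\to\gamma(F,h)$ in the quotient; hence $K_M$ is compact. Note that, because $h=0$ yields the unique limit $u^0$, nothing in the equivalence-class construction affects the exponential rate near $u^0$, consistent with the discussion following the definition of $\sim$.

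The main obstacle is precisely the convergence of the drift $\int_0^\cdot\!\int_U G(a,y,v^\epsilon_s(y))h^\epsilon_s(a)\la(da)\,ds$ when $h^\epsilon$ converges only \emph{weakly} while $G$ is merely $\tfrac12$-H\"older-type in the solution variable by (\ref{eq0502a}), not Lipschitz. Weak convergence of $h^\epsilon$ does not by itself survive composition with the nonlinearity $u\mapsto G(\cdot,\cdot,u)$; one needs the \emph{strong}, uniform-in-time convergence of $v^\epsilon$ to $v$ coming from tightness together with uniqueness, and then an argument that $a\mapsto G(a,y,v^\epsilon_s(y))$ converges strongly enough in $L^2([0,1]\times U)$ --- for instance by replacing $G$ with bounded, Lipschitz-in-$u$ approximations, estimating the error through (\ref{eq0502a}) and the uniform moment bounds, and sending the approximation parameter to zero. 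Carrying this interchange of limits out in the weighted H\"older topology on $\BB_{\al,\be}$, rather than in a weaker $L^2$-type topology, is the technical crux; the square-root modulus in (\ref{eq0502a}) is exactly what makes the Yamada--Watanabe uniqueness --- and hence the quotient construction --- both necessary and available here.
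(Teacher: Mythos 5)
Your proposal follows essentially the same route as the paper: the Budhiraja--Dupuis--Maroulas weak convergence method, reducing the LDP to (i) compactness of the level sets of $\gamma(F,\cdot)$ and (ii) convergence in distribution of the controlled processes $g^\epsilon(F,\sqrt\epsilon B+\int_0^\cdot h^\epsilon)$ to $\gamma(F,h)$, with tightness supplied by the weighted-H\"older estimates of Section~3 and the non-uniqueness of the skeleton equation absorbed into the quotient topology $\sim$. If anything you are more explicit than the paper about the delicate step of passing to the limit in the drift term under only weak convergence of $h^\epsilon$ and a non-Lipschitz $G$, which the paper dispatches in one line (``by taking limits on both sides'').
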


We now apply Theorem 2 to SBM and FVP. Suppose $\{\mu^{\epsilon}\}$ is an SBM with branching rate $\epsilon$. As indicated by Xiong \cite{Xio}, for all $y\in \mathbb{R}$,
 \begin{equation}\label{SBME}
 u^{\epsilon}_{t}(y) = \int_{0}^{y}\mu_{t}^{\epsilon}(dx)
 \end{equation}
  is the unique solution to SPDE (\ref{2.1}) with
  \begin{equation}\label{2.8}
  F(y)=\int_{0}^{y}\mu_{0}(dx), \hspace{.2cm}U= \mathbb{R},\hspace{.2cm} \lambda(da)=da\hspace{.2cm}\mbox{ and } G(a,y,u)= 1_{a<u}.
  \end{equation}

Assume $\mathcal{D}$ is the Schwartz space of test functions with compact support in $\mathbb{R}$
and continuous derivatives of all orders. Denote the dual space of real distributions on
$\mathbb{R}$ by $\mathcal{D}^{*}$. Similar to \cite{FGK}, for a fixed $\nu \in \mathcal{M}_\be(\mathbb{R})$, let
the Cameron-Martin space, $H_{\nu}$, be the set of measures
 $\mu \in \mathcal{C}([0,1];\mathcal{M}_\be(\mathbb{R}))$ satisfying the conditions below.
\begin{enumerate}
\item  $\mu_{0}=\nu$,
\item the $\mathcal{D}^{*}$-valued map $t\mapsto \mu_{t}$ defined on [0,1] is absolutely continuous with
 respect to time. Let $\dot{\mu}$ and $\Delta^{*}\mu$ be its generalized derivative and Laplacian
  respectively,
\item for every $t\in [0,1]$, $\dot{\mu}_t - \frac{1}{2}\Delta^{*} \mu_{t} \in \mathcal{D}^{*}$ is
 absolutely continuous with respect to $\mu_{t}$
 with $\frac{d\left(\dot{\mu}_{t}-\frac{1}{2}\Delta^{*}\mu_{t}\right)}{d\mu_t}$
  being the (generalized) Radon Nikodym derivative,
\item $\frac{d(\dot{\mu}_{t} -\frac{1}{2}
\Delta^{*}\mu_{t})}{d\mu_{t}}$ is in $L^{2}([0,1] \times \mathbb{R},
ds\mu(dy))$.
\end{enumerate}

The topology of $\cM_\be(\RR)$ is defined by the following modified
weak convergence topology. We say that $\mu^n\to\mu$ in
$\cM_\be(\RR)$ if for any $f\in C_b(\RR)$,
\[\int_\RR f(x)e^{-\be|x|}\mu^n(dx)\to \int_\RR
f(x)e^{-\be|x|}\mu(dx).\]

  \begin{theorem}
If $\mu_{0} \in \mathcal{M}_\be(\mathbb{R})$ such that $F \in
\mathbb{B}_{\alpha,\beta_{0}}$, then $\{\mu^{\epsilon}\}$ satisfies
the LDP on $\mathcal{C}([0,1];\mathcal{M}_\be(\mathbb{R}))$ with
rate function,
 \begin{equation}\label{rate4sbm}
  I(\mu)=  \left\{\begin{array} {ll}  \frac{1}{2} \displaystyle \int_{0}^{1}
  \int_{\mathbb{R}}\left|\frac{\left(\dot{\mu}_{t} - \frac{1}{2} \Delta^* \mu_{t}\right)(dy)}{\mu_{t}(dy)}\right|^2 \mu_{t}(dy) dt
   & \mbox{\emph{if }} \mu \in H_{\mu_{0}} \\
   \infty
  & \mbox{\emph{otherwise}.}     \end{array}   \right.
 \end{equation}
\end{theorem}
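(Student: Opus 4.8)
The strategy is to transfer the LDP for the random field $u^\epsilon$ (Theorem 2) to the measure-valued process $\mu^\epsilon$ via the contraction principle, using the identification \eqref{SBME}. First I would set up the continuous bijection between the two state spaces: the map $\Phi$ that sends a measure $\mu\in\mathcal M_\beta(\RR)$ to its (shifted) distribution function $y\mapsto\int_0^y\mu(dx)$, and check that under the choices \eqref{2.8} the function $F$ lies in $\mathbb B_{\alpha,\beta_0}$ (this is exactly the hypothesis $F\in\mathbb B_{\alpha,\beta_0}$ placed on $\mu_0$) and, more importantly, that $\Phi$ and its inverse are continuous when $\mathcal C([0,1];\mathcal M_\beta(\RR))$ carries the modified weak topology and $\mathcal C([0,1];\mathbb B_{\alpha,\beta})$ carries $d_{\alpha,\beta}$. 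The key points here: convergence in $d_{\alpha,\beta}$ of distribution functions (with the weight $e^{-\beta|x|}$ built in) implies the weighted weak convergence $\int f e^{-\beta|x|}\mu^n(dx)\to\int f e^{-\beta|x|}\mu(dx)$ for $f\in C_b(\RR)$ by an integration-by-parts/Helly-type argument, and conversely the Hölder regularity supplied by Theorem 1 ($\alpha<\tfrac12$) guarantees the image actually sits in $\mathcal C([0,1];\mathbb B_{\alpha,\beta})$. Since $u^\epsilon=\Phi(\mu^\epsilon)$ is precisely the solution of \eqref{2.1} with data \eqref{2.8} (Xiong \cite{Xio}), the contraction principle immediately yields the LDP for $\mu^\epsilon$ with rate function $I(\mu)=I_u(\Phi(\mu))$, where $I_u$ is the rate function \eqref{2.7} of Theorem 2.

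The remaining — and main — task is to show that this pushed-forward rate function equals the explicit expression \eqref{rate4sbm}. Concretely, I must show: (i) $\mu\in H_{\mu_0}$ if and only if there exists $h\in L^2([0,1]\times\RR,\,ds\,da)$ with $u=\gamma(F,h)$, where $u=\Phi(\mu)$; and (ii) when this holds, $\frac12\int_0^1\!\int_\RR |h_s(a)|^2\,da\,ds$ minimized over such $h$ equals $\frac12\int_0^1\!\int_\RR\bigl|\tfrac{(\dot\mu_t-\frac12\Delta^*\mu_t)(dy)}{\mu_t(dy)}\bigr|^2\mu_t(dy)\,dt$. For this I would plug $G(a,y,u)=1_{a<u}$ into the controlled equation \eqref{2.6}. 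The drift term becomes $\int_0^t\!\int_\RR 1_{a<u_s(y)}h_s(a)\,da\,ds=\int_0^t\!\int_{-\infty}^{u_s(y)}h_s(a)\,da\,ds$; differentiating formally in $y$ (i.e. passing to the measure $\mu_t(dy)=\partial_y u_t(y)\,dy$ in the distributional sense) the integrand $1_{a<u_s(y)}$ has $y$-derivative $\delta(a-u_s(y))\,\partial_y u_s(y)$, so the contribution to $\dot\mu_t - \frac12\Delta^*\mu_t$ at the point $y$ is $h_t(u_t(y))\,\mu_t(dy)$. Hence the Radon–Nikodym derivative in conditions 3–4 of $H_{\mu_0}$ is exactly $y\mapsto h_t(u_t(y))$. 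Because $u_t$ is (for $\mu_t$-a.e. point) the distribution function and the map $y\mapsto u_t(y)$ pushes $\mu_t$ forward to Lebesgue measure on the range of $u_t$, the change of variables $a=u_t(y)$ gives
\[
\int_0^1\!\int_\RR \bigl|h_t(u_t(y))\bigr|^2\,\mu_t(dy)\,dt
=\int_0^1\!\int_{\mathrm{range}(u_t)} |h_t(a)|^2\,da\,dt.
\]
The only freedom in $h$ is on the complement of $\mathrm{range}(u_t)$, i.e. where $1_{a<u_s(y)}$ is insensitive to $a$ in that range — there the optimal choice is $h=0$, which is why the infimum over $h$ collapses to the displayed $\mu$-integral and gives \eqref{rate4sbm}. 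I would also verify the equivalence of domains: if $\mu\in H_{\mu_0}$ then setting $h_t(a):=\bigl(\tfrac{d(\dot\mu_t-\frac12\Delta^*\mu_t)}{d\mu_t}\bigr)\circ u_t^{-1}(a)$ on $\mathrm{range}(u_t)$ and $0$ elsewhere produces an $L^2$ control reproducing $u=\Phi(\mu)$ via \eqref{2.6}, and conversely every $\gamma(F,h)$ arising from \eqref{2.6} with this $G$ has a distributional $y$-derivative that is a measure satisfying conditions 1–4.

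The delicate point I expect to be the real obstacle is the rigorous handling of the formal differentiation in $y$: the controlled solution $u_t(y)$ is only Hölder-$\alpha$ with $\alpha<\tfrac12$, not $C^1$, so the "density" $\mu_t(dy)=\partial_y u_t(y)\,dy$ and the change-of-variables identity $\int \psi(u_t(y))\,\mu_t(dy)=\int_{\mathrm{range}(u_t)}\psi(a)\,da$ must be justified purely measure-theoretically — $u_t$ is a nondecreasing right-continuous function, $\mu_t$ its Lebesgue–Stieltjes measure, and the pushforward identity is a standard fact about Stieltjes measures provided one is careful about atoms of $\mu_t$ and flat stretches of $u_t$ (which correspond precisely to the $a$-values where the control is irrelevant). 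Making the passage from the SPDE/PDE \eqref{2.6} in the field variable to the distributional evolution equation for $\mu_t$ rigorous — identifying $\Delta^* \mu_t$ with the distributional second derivative of $\frac12\Delta u_t$ integrated, and showing absolute continuity in $t$ in $\mathcal D^*$ — is where the technical weight of the proof lies; everything else is a routine application of the contraction principle together with the topological comparison from the first paragraph.
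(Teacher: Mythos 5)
Your plan is correct and follows essentially the same route as the paper: the LDP is transferred from $u^{\epsilon}$ to $\mu^{\epsilon}$ by the contraction principle through the distribution-function map (whose continuity, via integration by parts against a smoothed weight, is the paper's Lemma~\ref{lem0420b}), and the rate function is identified by substituting $G(a,y,u)=1_{a<u}$ into (\ref{2.6}), integrating by parts, and using the change of variables $a=u_t(y)$ that pushes $\mu_t$ forward to Lebesgue measure. Your additional remarks on the range of $u_t$ and the optimal choice $h=0$ off that range make explicit a point the paper treats tersely, but they do not change the argument.
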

 As for FVP, if $\left\{\mu^{\epsilon}\right\}$ is an FVP, then $u^\ep_t$ is defined as
\begin{equation*}
u^{\epsilon}_{t}(y) = \mu_{t}^{\epsilon} ((-\infty, y])
\end{equation*}
for all $y\in \mathbb{R}$, and by this definition, $u^{\epsilon}_{t}$ is the solution of SPDE (\ref{2.1}) with
\begin{equation}\label{2.9}
F(y)=\mu_{0}((-\infty,y]), \hspace{.2cm} U =[0,1], \hspace{.2cm}
\lambda(da)= da\hspace{.2cm}\mbox{ and } G(a,y,u)= 1_{a<u} -u.
\end{equation}
In this case, let $\tilde{H}_{\nu}$ be the space for which
conditions for $H_{\nu}$ hold with $\mathcal{M}_\be(\mathbb{R})$
replaced by $\mathcal{P}_{\beta}(\mathbb{R}):=
\mathcal{M}_{\beta}(\mathbb{R}) \cap \mathcal{P}(\mathbb{R})$, the
collection of Borel probability measures on $\mathbb{R}$, and with
the additional assumption,
\begin{equation*}
\left<\mu_{t}, \frac{\left(\dot{\mu}_{t}-\frac{1}{2} \Delta^*
\mu_{t}\right)(dy)}{\mu_{t}(dy)}\right> = 0.
\end{equation*}

\begin{theorem}
Suppose $\mu_{0} \in \mathcal{P}_{\beta}(\mathbb{R})$ such that $F
\in \mathbb{B}_{\alpha,\beta_{0}}$. Then, $\{\mu^{\epsilon}\}$
satisfies the LDP on $\mathcal{C}([0,1];
\mathcal{P}_{\beta}(\mathbb{R}))$ with rate function,
\begin{equation}\label{rate4fvp}
  I(\mu)=  \left\{\begin{array} {ll}  \frac{1}{2} \displaystyle \int_{0}^{1}
  \int_{\mathbb{R}}\left|\frac{\left(\dot{\mu}_{t} - \frac{1}{2} \Delta^{*} \mu_{t}\right)(dy)}{\mu_{t}(dy)}\right|^2 \mu_{t}(dy) dt
   & \mbox{\emph{if }} \mu \in \tilde{H}_{\mu_{0}} \\
   \infty
  & \mbox{\emph{otherwise.}}     \end{array}   \right.
 \end{equation}
 \end{theorem}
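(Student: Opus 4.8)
The plan is to transfer the large deviation principle of Theorem 2 to $\{\mu^\epsilon\}$ through Xiong's representation \cite{Xio}, via the contraction principle. With the Fleming--Viot data $F(y)=\mu_0((-\infty,y])$, $U=[0,1]$, $\lambda(da)=da$, $G(a,y,u)=1_{a<u}-u$ from \eqref{2.9}, one has $\int_0^1|1_{a<u_1}-1_{a<u_2}|^2\,da=|u_1-u_2|$ and $\int_0^1|1_{a<u}-u|^2\,da\le1$, so \eqref{eq0502a}--\eqref{eq0502b} hold and Theorem 2 applies: $u^\epsilon_t(y)=\mu^\epsilon_t((-\infty,y])$ satisfies the LDP in $\mathcal{C}([0,1];\mathbb{B}_{\alpha,\beta})$ with rate function $I$ given by \eqref{2.7}. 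Almost surely $u^\epsilon_t$ lies in the closed subset $\mathcal{K}\subset\mathbb{B}_{\alpha,\beta}$ of distribution functions of measures in $\mathcal{P}_\beta(\mathbb{R})$, and $\mu^\epsilon=\Psi(u^\epsilon)$, where $\Psi$ sends a path of distribution functions to the corresponding path of probability measures.

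First I would check that $\Psi\colon\mathcal{K}\to\mathcal{C}([0,1];\mathcal{P}_\beta(\mathbb{R}))$ is continuous: if $u^n\to u$ in $\mathcal{C}([0,1];\mathbb{B}_{\alpha,\beta})$ then $u^n_t(y)\to u_t(y)$ at every continuity point of $u_t$, uniformly in $t$, which for distribution functions of probability measures forces weak convergence $\mu^n_t\to\mu_t$ uniformly in $t$, and in particular convergence in the (weaker) modified weak topology of $\mathcal{P}_\beta(\mathbb{R})$, since the latter only tests against functions dominated by $e^{-\beta|x|}$. One also checks, by a comparison argument using the sign structure of $G$, that every $u=\gamma(F,h)$ built from the data \eqref{2.9} is again a path of distribution functions, so that $\{I<\infty\}\subset\mathcal{K}$. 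Since moreover $u^\epsilon\in\mathcal{K}$ almost surely, the LDP of Theorem 2 restricts to $\mathcal{K}$, and the contraction principle applied to $\Psi$ gives that $\{\mu^\epsilon\}$ satisfies the LDP in $\mathcal{C}([0,1];\mathcal{P}_\beta(\mathbb{R}))$ with rate function $\tilde I(\mu)=\inf\{I(u):\Psi(u)=\mu\}$.

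The remaining and most substantial step is to identify $\tilde I$ with \eqref{rate4fvp}. Let $u_t$ be the distribution function of $\mu_t$ and suppose $u=\gamma(F,h)$. Substituting $G(a,y,u)=1_{a<u}-u$ into \eqref{2.6} yields
\[
u_t(y)=F(y)+\int_0^t\Bigl(\int_0^{u_s(y)}h_s(a)\,da-u_s(y)\,\bar h_s\Bigr)\,ds+\int_0^t\tfrac12(\Delta^*\mu_s)((-\infty,y])\,ds,\qquad \bar h_s:=\int_0^1 h_s(a)\,da,
\]
and reading this as an identity between $\mathcal{D}^*$-valued absolutely continuous paths and differentiating in $y$ gives that $\dot\mu_t-\tfrac12\Delta^*\mu_t\ll\mu_t$ with Radon--Nikodym derivative $y\mapsto h_t(u_t(y))-\bar h_t$, so conditions 1--4 of $\tilde H_{\mu_0}$ hold. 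Because these conditions force $\mu_t$ to be absolutely continuous for a.e.\ $t$, the push-forward of $\mu_t$ by $u_t$ is Lebesgue measure on $[0,1]$; hence $\langle\mu_t,h_t(u_t(\cdot))-\bar h_t\rangle=\int_0^1 h_t(a)\,da-\bar h_t=0$, so the extra constraint defining $\tilde H_{\mu_0}$ holds automatically, and the same change of variables gives
\[
\int_0^1|h_t(a)|^2\,da=\bar h_t^{\,2}+\int_{\mathbb{R}}\bigl|h_t(u_t(y))-\bar h_t\bigr|^2\,\mu_t(dy)\ \ge\ \int_{\mathbb{R}}\left|\frac{(\dot\mu_t-\tfrac12\Delta^*\mu_t)(dy)}{\mu_t(dy)}\right|^2\mu_t(dy)
\]
for a.e.\ $t$, whence $\tilde I(\mu)$ dominates the right-hand side of \eqref{rate4fvp}. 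For the matching upper bound, given $\mu\in\tilde H_{\mu_0}$ with Radon--Nikodym derivative $g_t$, set $h_t(a):=g_t(u_t^{-1}(a))$; then $\int_0^1 h_t(a)\,da=\langle\mu_t,g_t\rangle=0$, one verifies $u=\gamma(F,h)$, and $\tfrac12\int_0^1\int_0^1|h_t(a)|^2\,da\,dt$ equals the right-hand side of \eqref{rate4fvp}. If $\mu\notin\tilde H_{\mu_0}$ then no such $h$ exists and $\tilde I(\mu)=\infty$. (Theorem 3 follows by the same scheme with $G(a,y,u)=1_{a<u}$, $u_t(y)=\int_0^y\mu^\epsilon_t(dx)$, $\mathcal{M}_\beta(\mathbb{R})$ replacing $\mathcal{P}_\beta(\mathbb{R})$, and no $\bar h_s$ term, hence no constraint $\langle\mu_t,\cdot\rangle=0$.)

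The hardest part will be making the differentiation in $y$ and the change of variables $a\leftrightarrow u_t^{-1}(a)$ rigorous in the distributional setting: one must verify that the conditions defining $\tilde H_{\mu_0}$ genuinely force $\mu_t$ to be absolutely continuous for a.e.\ $t$, so that $(u_t)_*\mu_t$ is Lebesgue measure on $[0,1]$ and the quantile map $u_t^{-1}$ is well defined, and one must reconcile the quotient by $\sim$ in Theorem 2 with the map $\Psi$ --- specifically, that an equivalence class of controlled solutions contains at most one path of distribution functions, so that $\Psi$ descends to the quotient. A secondary technical point is the precise form of the contraction principle, since $\Psi$ is a priori defined only on $\mathcal{K}$; this is handled by the inclusion $\{I<\infty\}\subset\mathcal{K}$ together with $u^\epsilon\in\mathcal{K}$ a.s.
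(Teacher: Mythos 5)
Your proposal is correct and follows essentially the same route as the paper: apply Theorem 2 to the SPDE with $G(a,y,u)=1_{a<u}-u$, transfer the LDP to $\{\mu^\epsilon\}$ by the contraction principle through the (continuous) map from paths of distribution functions to paths of probability measures, and identify the rate function via the change of variables $a=u_t(y)$ together with the observation that replacing $h_s$ by $h_s-\int_0^1h_s(a)\,da$ leaves \eqref{2.6} invariant and minimizes the $L^2$ cost. Your Pythagorean decomposition of $\int_0^1|h_t(a)|^2da$ and the remark that the zero-mean constraint in $\tilde H_{\mu_0}$ is automatic are just slightly more explicit renderings of the paper's ``choose $h$ with $\int_0^1h_s(a)\,da=0$'' step.
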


Proofs of Theorems 1-4 will be given in Sections 3-6. Throughout the
rest of this paper, $K$ will denote a constant whose value can be
changed from place to place.

\section{Regularity of SPDE}

This section is devoted to the proof of Theorem 1. For the simplicity of notation, we take $\epsilon = 1$
and denote $u_{t}^{\epsilon}(y)$ by $u_{t}(y)$. The solution to SPDE (\ref{2.1}) can then be written in the following mild form,
\begin{eqnarray}\label{3.1}
u_{t}(y) &=&  \int_{\mathbb{R}} p_{t}(y-x)F(x)dx \nonumber\\
&& + \displaystyle \int_{0}^{t} \int_{\mathbb{R}}\int_{U} p_{t-s}(y-x)G(a,x,u_{s}(x))W(dsda)dx
\end{eqnarray}
where $p_{t}(x) = \frac{1}{\sqrt{2\pi t}}
\exp\left(-\frac{x^2}{2t}\right)$ is the heat kernel. Here we refer
to the first term on the RHS of (\ref{3.1}) by $u^{0}_{t}(y)$, and
the second term by $v_{t}(y)$.

The following lemma offers an estimate that is the starting point
for other more refined estimates of the solution. The proof is
identical to that of Lemma 2.3 in \cite{Xio} so we omit it.

\begin{lemma}
For any $n\ge 2$ and $\be_1\in(\be_0,\be)$, we have
\begin{equation}\label{eq0502c}
M:=\sup_{0\le s\le 1}\EE\(\int_\RR
|u_s(x)|^2e^{-2\be_1|x|}dx\)^n<\infty.\end{equation}
\end{lemma}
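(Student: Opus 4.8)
The plan is to bound the $n$-th moment of the weighted $L^2$-norm of the mild solution $u_s$ by a Gronwall-type argument applied to the function $s\mapsto \sup_{0\le r\le s}\EE\bigl(\int_\RR |u_r(x)|^2 e^{-2\be_1|x|}dx\bigr)^n$. Starting from the mild form (\ref{3.1}), I would split $u_s(x)=u^0_s(x)+v_s(x)$ and estimate the two pieces separately. For the deterministic part, the bound (\ref{eq0405a}) on $F$ gives $|F(x)|\le Ke^{\be_0|x|}$, and since $\be_0<\be_1$ the heat semigroup $p_s*F$ still satisfies a bound of the form $|u^0_s(x)|\le K e^{\be_1|x|}$ uniformly in $s\in[0,1]$ (the Gaussian tails of $p_s$ absorb the exponential growth, with a constant depending on $\be_1-\be_0$ and the time horizon $1$); hence $\int_\RR |u^0_s(x)|^2 e^{-2\be_1|x|}dx \le K$ and its $n$-th power is bounded.

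For the stochastic part $v_s(x)=\int_0^s\int_\RR\int_U p_{s-r}(x-z)G(a,z,u_r(z))W(drda)dz$, I would apply Burkholder--Davis--Gundy in the $n$-th moment (for the space-time stochastic integral, after testing against $e^{-2\be_1|x|}dx$ and using Minkowski/Jensen to move the $L^2_x$-norm inside), reducing to the quadratic variation
\[
\int_0^s\int_U\Bigl(\int_\RR p_{s-r}(x-z)G(a,z,u_r(z))\,dz\Bigr)^2 e^{-2\be_1|x|}\,dx\,\la(da)\,dr.
\]
Using Cauchy--Schwarz in $z$ against $p_{s-r}(x-z)\,dz$ (a probability measure), then Fubini, then the growth condition (\ref{eq0502b}) $\int_U |G(a,z,u)|^2\la(da)\le K(1+|u|^2)$, together with the elementary heat-kernel estimate $\int_\RR p_{s-r}(x-z)e^{-2\be_1|x|}dx \le K e^{-2\be_1|z|}$ (again valid on the bounded time interval), bounds the quadratic variation by $K\int_0^s \bigl(1+\int_\RR |u_r(z)|^2 e^{-2\be_1|z|}dz\bigr)\,dr$. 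Raising to the $n$-th power and using $(a+b)^n\le 2^{n-1}(a^n+b^n)$ and Hölder in $r$ then yields $\EE|v_s|^{2n}_{\text{weighted}} \le K + K\int_0^s \sup_{0\le r'\le r}\EE\bigl(\int_\RR |u_{r'}(z)|^2 e^{-2\be_1|z|}dz\bigr)^n\,dr$.

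Combining the two parts gives, with $\Phi(s):=\sup_{0\le r\le s}\EE\bigl(\int_\RR |u_r(x)|^2 e^{-2\be_1|x|}dx\bigr)^n$, an inequality $\Phi(s)\le K + K\int_0^s\Phi(r)\,dr$, so Gronwall's lemma yields $\Phi(1)<\infty$, which is exactly (\ref{eq0502c}). The main technical point — and the step I expect to require the most care — is the a priori finiteness of $\Phi(s)$, i.e. justifying that we may run Gronwall's inequality rather than merely derive a tautology; this is precisely why the lemma is stated as being proved by the argument of Lemma 2.3 in \cite{Xio}, where one first works with a truncated/localized version of the coefficient $G$ (or with a stopping-time localization making $u$ bounded), establishes the bound uniformly in the truncation with a constant independent of it, and then passes to the limit. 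A secondary subtlety is the repeated use of the heat-kernel weight estimate $\int_\RR p_t(x-z)e^{-2\be_1|x|}dx\le Ce^{-2\be_1|z|}$ with $C=C(\be_1)$ uniform for $t\in[0,1]$; this follows from completing the square but must be stated cleanly since it is invoked in both the drift-free and the stochastic estimates.
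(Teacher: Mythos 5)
Your proposal is correct and follows essentially the route the paper intends: the paper omits the proof entirely, citing Lemma 2.3 of \cite{Xio}, which is precisely the splitting $u=u^0+v$, the BDG estimate with the weighted heat-kernel bound $\int_\RR p_t(x-z)e^{-2\be_1|x|}dx\le Ce^{-2\be_1|z|}$, and a Gronwall iteration with a localization to secure a priori finiteness. You have correctly identified both the argument and the two technical points that make it work.
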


Inspired by Shiga \cite{Shi}, to obtain the regularity of the  solution to SPDE (\ref{3.1})
and for its tightness to be used in a later section, the following
refined version of Kolmogorov's criterion is proved and applied.

\begin{lemma}\label{lem0405a}
Let $\{u^{\epsilon}_t(y)\}$ be a sequence of random fields and
suppose $\beta_1\in (\beta_{0},\beta)$. If there exist constants
$n,\;q,\;K>0$ such that
\begin{equation}\label{kolmogorov}
\mathbb{E}\left|u^{\epsilon}_{t_1}(y_1)-u^{\epsilon}_{t_2}(y_2)\right|^n
\le
Ke^{n\be_1(|y_1|\vee|y_2|)}\left(|y_1-y_2|+|t_1-t_2|\right)^{2+q},
\end{equation}
then,
\begin{equation}\label{Holder}
\sup_{\ep>0} \mathbb{E}\left|\sup_{m} \sup_{t_{i} \in [0,1],
|y_{i}|\leq m,i=1,2}
\frac{\left|u^{\epsilon}_{t_1}(y_1)-u^{\epsilon}_{t_2}(y_2)\right|}{\left(|y_1-y_2|+|t_1-t_2|\right)^{\alpha}}e^{-\beta
m}\right|^n < \infty.
\end{equation}
Furthermore, if
$\displaystyle\sup_{\ep>0}\mathbb{E}\left|u^{\epsilon}_{t_{0}}(y_{0})\right|^n
< \infty$ for some $(t_{0},y_{0}) \in [0,1]\times \mathbb{R}$, then
\begin{equation}\label{bdd}
\sup_{\ep>0}\EE\left|\displaystyle \sup_{(t,y)\in [0,1]\times
\mathbb{R}} e^{-\beta|y|} |u_{t}^{\epsilon}(y)|\right|^n<\infty.
\end{equation}
With the above additional assumption, the sequence
$\{u^{\epsilon}\}$ is tight in ${\cal
C}([0,1];\mathbb{B}_{\alpha,\beta})$.
\end{lemma}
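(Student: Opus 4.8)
The plan is to follow the classical Garsia–Rodemich–Rumsey (GRR) route to a Kolmogorov-type criterion, but carried out carefully on the unbounded parameter space $[0,1]\times\RR$ with the weight $e^{-\be m}$ on the block $|y_i|\le m$, and then to upgrade the resulting uniform moment bounds to tightness. First I would fix $m\in\NN$ and restrict the field to the compact parameter set $Q_m:=[0,1]\times[-m,m]$, viewed as a subset of $\RR^2$ with its Euclidean metric. On $Q_m$ the hypothesis (\ref{kolmogorov}) reads $\EE|u^\ep_{t_1}(y_1)-u^\ep_{t_2}(y_2)|^n\le Ke^{n\be_1 m}\,d\big((t_1,y_1),(t_2,y_2)\big)^{2+q}$, so by the standard GRR/Kolmogorov argument there is a constant $C(n,q)$, independent of $\ep$ and $m$, with
\[
\EE\Big(\sup_{\substack{(t_i,y_i)\in Q_m\\ (t_1,y_1)\ne(t_2,y_2)}}\frac{|u^\ep_{t_1}(y_1)-u^\ep_{t_2}(y_2)|}{d\big((t_1,y_1),(t_2,y_2)\big)^{\al}}\Big)^{n}\le C(n,q)\,e^{n\be_1 m},
\]
valid for any $\al<(q/2)\wedge\tfrac12$ once $n$ is large enough that $n\al+2<n(q/2)+2$ leaves room — more precisely one needs $\al<\tfrac{q}{2}$ and $n\big(\tfrac q2-\al\big)>2$, which we may assume after enlarging $n$ (enlarging $n$ only strengthens the hypothesis via Jensen applied to a probability normalization, so there is no loss). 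Multiplying by $e^{-n\be m}$ and using $e^{n(\be_1-\be)m}\le 1$ (since $\be_1<\be$) gives that the $m$-th term is bounded by $C(n,q)$ uniformly in $\ep$ and $m$; summing, or rather taking the $\sup_m$ inside, and using $\be_1<\be$ strictly so that $\sum_m e^{n(\be_1-\be)m}<\infty$ would even give a summable bound, one obtains (\ref{Holder}).

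Next I would derive (\ref{bdd}) from (\ref{Holder}) plus the single-point bound $\sup_\ep\EE|u^\ep_{t_0}(y_0)|^n<\infty$. For any $(t,y)$ with $|y|\le m$, write $u^\ep_t(y)=u^\ep_{t_0}(y_0)+\big(u^\ep_t(y)-u^\ep_{t_0}(y_0)\big)$; the increment is controlled by the $\al$-Hölder seminorm on $Q_{m\vee|y_0|+1}$ times a distance that is at most $(1+2m)^\al$ or so. Thus $e^{-\be|y|}|u^\ep_t(y)|$ is bounded, after taking $\sup$ over $(t,y)\in[0,1]\times\RR$, by $|u^\ep_{t_0}(y_0)|+\sup_m e^{-\be m}(1+2m)^\al\,\Lambda^\ep_{m'}$ where $\Lambda^\ep_{m'}$ is the Hölder seminorm appearing in (\ref{Holder}); since $e^{-\be m}(1+2m)^\al\to0$, the $L^n$-norm of this quantity is finite uniformly in $\ep$ by (\ref{Holder}) and the single-point assumption. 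This establishes (\ref{bdd}), and in particular shows each $u^\ep$ lies a.s. in $\BB_{\al,\be}$.

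Finally, for tightness in $\cC([0,1];\BB_{\al,\be})$ I would invoke a Kolmogorov–Chentsov/Arzelà–Ascoli argument adapted to the metric $d_{\al,\be}$. Fix $\al'\in(\al,(q/2)\wedge\tfrac12)$ and repeat the GRR step with $\al'$ in place of $\al$; this yields a uniform-in-$\ep$ bound on the $\al'$-Hölder seminorm of the maps $t\mapsto u^\ep_t$ in each $\|\cdot\|_{m,\al,\be}$, hence on the $d_{\al,\be}$-modulus of continuity in $t$, together with a uniform bound on $\EE\,d_{\al,\be}(u^\ep_t,0)^n$ from (\ref{bdd}). The set of $\omega\in\cC([0,1];\BB_{\al,\be})$ with $\sup_t d_{\al,\be}(\omega_t,0)\le R$ and $d_{\al,\be}(\omega_s,\omega_t)\le R|s-t|^{\al'}$ is, by Arzelà–Ascoli (using that bounded sets of $\BB_{\al',\be}$ are relatively compact in $\BB_{\al,\be}$ — this uses $\al'>\al$ and the weight $e^{-\be m}>e^{-\be m'}$-type gain), relatively compact; Markov's inequality applied to the two uniform moment bounds makes its complement uniformly small in $\ep$. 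Hence $\{u^\ep\}$ is tight. The main obstacle I expect is the first step: getting the GRR constant genuinely uniform in both $\ep$ \emph{and} the block index $m$, i.e. correctly tracking how the exponential factor $e^{n\be_1 m}$ from the hypothesis combines with the weight $e^{-n\be m}$ and the diameter of $Q_m$, and choosing $n$ and the Hölder exponents $\al<\al'<q/2$ so that every sum over $m$ converges — the strict inequality $\be_1<\be$ is exactly what makes this work, and one must be careful that the ``distance'' $|y_1-y_2|+|t_1-t_2|$ in (\ref{kolmogorov}), which is not the Euclidean metric, only changes constants.
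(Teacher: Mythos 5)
Your overall strategy coincides with the paper's: apply a Kolmogorov-type continuity criterion block by block on $[0,1]\times[-m,m]$, obtain a random H\"older constant $Y_m$ with $\EE Y_m^n \le K\,(\mathrm{poly}\,m)\,e^{n\be_1 m}$, and use the strict gap $\be_1<\be$ to make $Y:=\sup_m Y_m e^{-\be m}$ have finite $n$-th moment; then telescope from the single point $(t_0,y_0)$ to get (\ref{bdd}), and conclude tightness from Arzel\`a--Ascoli and Prohorov. The issue you single out as the ``main obstacle'' --- uniformity of the Kolmogorov constant over the growing blocks --- is handled in the paper by rescaling each block to $[-1,1]$ via $y'=y/m$, which turns the hypothesis into a bound with the explicit factor $m^{2+q}e^{n\be_1 m}$ on a fixed compact set; that polynomial factor is then annihilated by $e^{-n(\be-\be_1)m}$ exactly as you predict. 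Your remark that genuine compactness in $\mathbb{B}_{\al,\be}$ requires a H\"older exponent strictly larger than $\al$ (and a $\be_2\in(\be_1,\be)$ to spare) is a refinement the paper passes over silently, and is correct.

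One step of your write-up is wrong, although it turns out not to be needed: the claim that one may ``enlarge $n$'' because this ``only strengthens the hypothesis via Jensen.'' Jensen goes the other way --- an $L^n$ bound controls lower moments, not higher ones --- so if (\ref{kolmogorov}) is given for one $n$ you cannot assume it for a larger one. Fortunately no enlargement is required: on the two-dimensional parameter set the exponent $2+q$ with $q>0$ already exceeds the dimension, and the criterion delivers H\"older continuity of every order below $q/n$ (not below $q/2-2/n$, as your bookkeeping suggests); this $q/n$ is precisely the exponent the paper's proof produces and uses in place of $\al$, the conclusion (\ref{Holder}) with a prescribed $\al$ being implicitly restricted to $\al\le q/n$. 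With that parenthetical deleted and the exponent corrected, your argument is the paper's argument.
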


\begin{proof}
For $i=1,2$, let $y'_{i}:=\frac1m y_i$ and
$\tilde{u}_{t}^{\epsilon}(y'_{i}):=u_{t}^{\epsilon}(y_{i})$. By the
hypothesis,
\begin{eqnarray}\label{K1}
\mathbb{E} \left|\tilde{u}_{t_{1}}^{\epsilon}(y'_{1})-\tilde{u}_{t_{2}}^{\epsilon}(y'_{2})\right|^{n} &=& \mathbb{E} \left|u_{t_{1}}^{\epsilon}(my'_{1})-u_{t_{2}}^{\epsilon}(my'_{2})\right|^{n}\nonumber\\
&\leq& Ke^{n\beta_{1}\left(|y_{1}|\vee |y_{2}|\right)} \left(m\left|y'_{1}-y'_{2}\right| + |t_{1}-t_{2}|\right)^{2+q}\nonumber\\
&\leq& Km^{2+q} e^{n\beta_{1}m} \left(\left|y'_{1}-y'_{2}\right| +
|t_{1}-t_{2}|\right)^{2+q}.
\end{eqnarray}
By Kolmogorov's criterion (cf. Corollary 1.2 in Walsh \cite{Wal}),
there exists a random variable $Y_{m}$ such that $
\mathbb{E}Y_{m}^{n} \leq K m^{2+q} e^{n\beta_{1}m}$ and
\begin{equation*}
\left|\tilde{u}_{t_{1}}^{\epsilon}(y'_{1})-\tilde{u}_{t_{2}}^{\epsilon}(y'_{2})\right|
\leq Y_{m}\left(|y'_{1}-y'_{2}| + |t_{1}-t_{2}|\right)^{q/n}
\end{equation*}
therefore,
\begin{equation}\label{eq0305a}
\left|u_{t_{1}}^{\epsilon}(y_{1})-u_{t_{2}}^{\epsilon}(y_{2})\right|
\leq Y_{m} \left(|y_{1}-y_{2}| + |t_{1}-t_{2}|\right)^{q/n}.
\end{equation}
Let $Y:= \displaystyle \sup_{m}\{ Y_{m}e^{-\beta m}\}$. Then,
\begin{eqnarray}\label{K2}
\mathbb{E}Y^{n} &\leq& \mathbb{E} \sum_{m} Y_{m}^{n} e^{-\beta mn}\\
&=& \displaystyle \sum_{m}\mathbb{E} Y_{m}^{n} e^{-\beta mn}\nonumber\\
&\leq& \sum_{m} K m^{2+q} e^{-\left(\beta-\beta_{1}\right)mn} <
\infty. \nonumber
\end{eqnarray}
Thus, $Y$ is a finite random variable, and (\ref{eq0305a}) implies
\begin{equation}\label{eq0305b}
\left|u_{t_{1}}^{\epsilon}(y_{1})-u_{t_{2}}^{\epsilon}(y_{2})\right|
\leq Y e^{\beta m}\left(|y_{1}-y_{2}| + |t_{1}-t_{2}|\right)^{q/n}.
\end{equation}

Now, we suppose there exists $(t_0,y_0) \in [0,1] \times \mathbb{R}$
such that
\[\sup_{\ep>0}\EE|u_{t_0}^{\epsilon}(y_0)|^n< \infty.\]
Note that (\ref{eq0305b}) remains true with $\be$ replaced by
$\be_2\in(\be_1,\be)$. For the simplicity of notation, we choose
$t_0=y_0=0$. Taking $t_1=t$, $y_1=y$ and $t_2=y_2=0$ in
(\ref{eq0305b}), gives
\[|u^\ep_t(y)|\le |u^\ep_0(0)|+Y e^{\beta_{2} m}\left(|y| +
|t|\right)^{q/n}.\] Suppose that $|y|\le m$. Then,
\begin{eqnarray*}
e^{-\be|y|}|u^\ep_t(y)|&\le& e^{-\beta|y|}|u^\ep_0(0)|+Y
e^{-(\beta-\be_2)
|y|}e^{\be_2}\left(|y| + |t|\right)^{q/n}\\
&\le&K\(e^{-\beta_0|y|}|u^\ep_0(0)|+Y\)
\end{eqnarray*}
for a suitable constant $K$ (independent of $m$). Inequality
(\ref{bdd}) then follows easily.

Uniform boundedness and equicontinuity are implied by (\ref{bdd})
and (\ref{Holder}), respectively. Therefore, tightness of the
sequence follows from Arzel\`a-Ascoli and Prohorov theorems.
 \end{proof}

The following lemmas illustrate $u_{t}^{0}$ and $v_{t}$ are included in $\mathbb{B}_{\alpha, \beta}$ space.
 These lemmas along with the result in \cite{Xio} on existence and uniqueness of a mild solution to SPDE (\ref{2.1}), prove Theorem 1.

\begin{lemma}\label{lem0420a}
 $u^{0}_{\cdot}$ is an element of $\mathcal{C}([0,1];\mathbb{B}_{\alpha, \beta})$.
\end{lemma}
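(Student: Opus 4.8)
The function $u^{0}_{t}(y)=\int_{\RR}p_{t}(y-x)F(x)\,dx$ equals $P_{t}F$, where $(P_{t}f)(y):=\int_{\RR}p_{t}(y-x)f(x)\,dx$ is the heat semigroup and $P_{0}f=f$. The plan is to prove two things: (a) for each fixed $t\in[0,1]$, $u^{0}_{t}\in\mathbb{B}_{\alpha,\beta}$; and (b) the map $t\mapsto u^{0}_{t}$ is continuous from $[0,1]$ into $(\mathbb{B}_{\alpha,\beta},d_{\alpha,\beta})$. Note that $u^{0}_{0}=F\in\mathbb{B}_{\alpha,\beta_{0}}\subseteq\mathbb{B}_{\alpha,\beta}$ since $\beta_{0}<\beta$, so the endpoint $t=0$ is a genuine point of the path, and it is there that (b) will demand the most care.

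For (a): the growth bound \eqref{eq0405a} follows from $|F(x)|\le Ke^{\beta_{0}|x|}$ and $|x|\le|y|+|y-x|$, which give $|u^{0}_{t}(y)|\le Ke^{\beta_{0}|y|}\int_{\RR}p_{t}(z)e^{\beta_{0}|z|}\,dz$; the Gaussian integral is bounded by a constant uniformly for $t\le1$, so $|u^{0}_{t}(y)|\le Ke^{\beta_{0}|y|}\le Ke^{\beta|y|}$. For the Hölder bound \eqref{eq3}, write $u^{0}_{t}(y_{1})-u^{0}_{t}(y_{2})=\int_{\RR}p_{t}(z)\bigl(F(y_{1}-z)-F(y_{2}-z)\bigr)\,dz$ and apply \eqref{eq3} for $F$ at the integer level $\lceil m+|z|\rceil$ to get $|F(y_{1}-z)-F(y_{2}-z)|\le Ke^{\beta_{0}(m+|z|+1)}|y_{1}-y_{2}|^{\alpha}$ whenever $|y_{1}|,|y_{2}|\le m$; integrating against $p_{t}(z)e^{\beta_{0}|z|}$ yields $|u^{0}_{t}(y_{1})-u^{0}_{t}(y_{2})|\le Ke^{\beta_{0}m}|y_{1}-y_{2}|^{\alpha}$, uniformly for $t\le1$. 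Hence $u^{0}_{t}\in\mathbb{B}_{\alpha,\beta_{0}}$, and the whole family $\{u^{0}_{t}\}_{t\in[0,1]}$ is bounded in $\mathbb{B}_{\alpha,\beta}$ with a constant independent of $t$.

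For (b): since $d_{\alpha,\beta}$ is the $2^{-m}$-weighted sum of $\|\cdot\|_{m,\alpha,\beta}\wedge1$, dominated convergence over $m$ reduces the continuity to showing $\|u^{0}_{t}-u^{0}_{s}\|_{m,\alpha,\beta}\to0$ as $s\to t$ for each fixed $m$. For $s\le t$, write $u^{0}_{t}-u^{0}_{s}=P_{s}(P_{t-s}F-F)$. The sup-norm part of $\|u^{0}_{t}-u^{0}_{s}\|_{m,\alpha,\beta}$ reduces to the single modulus $\omega(\delta):=\sup_{x}e^{-\beta_{0}|x|}|P_{\delta}F(x)-F(x)|\to0$ as $\delta\to0$, proved by splitting $P_{\delta}F(x)-F(x)=\int p_{\delta}(z)\bigl(F(x-z)-F(x)\bigr)\,dz$ into $|z|\le1$, where the local $\alpha$-Hölder continuity of $F$ gives a contribution $\le Ke^{\beta_{0}|x|}\delta^{\alpha/2}$, and $|z|>1$, where the exponential growth of $F$ against the Gaussian tail gives an $x$-uniform contribution tending to $0$; since $P_{s}$ maps the $e^{\beta_{0}|\cdot|}$-weighted sup norm into itself with a constant independent of $t$, this gives $\sup_{x}e^{-\beta|x|}|u^{0}_{t}(x)-u^{0}_{s}(x)|\le K\omega(|t-s|)\to0$. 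For the Hölder-seminorm part, split the supremum over $|y_{i}|\le m$, $y_{1}\ne y_{2}$ according to whether $|y_{1}-y_{2}|\ge\eta$, a range controlled by $2\|u^{0}_{t}-u^{0}_{s}\|_{L^{\infty}(|\cdot|\le m)}/\eta^{\alpha}$ and hence small by the sup-norm estimate once $\eta>0$ is fixed, or $|y_{1}-y_{2}|<\eta$; one then lets $\eta\to0$ at the end. On $(0,1]$ the whole continuity is also obtained more cheaply from the fact that $P_{r}$ becomes a smoothing operator for $r$ bounded away from $0$.

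The main obstacle is the small-increment regime $|y_{1}-y_{2}|<\eta$ of the Hölder seminorm, above all at $t=0$. I would attack it by a direct splitting of the $z$-integral at the scale $|y_{1}-y_{2}|$, using the second-difference estimate $\bigl|\bigl(F(y_{1}-z)-F(y_{2}-z)\bigr)-\bigl(F(y_{1})-F(y_{2})\bigr)\bigr|\le Ke^{\beta_{0}(m+|z|+1)}\min(|z|,|y_{1}-y_{2}|)^{\alpha}$ for $|y_{i}|\le m$ (obtained from \eqref{eq3} applied both to the pair $(y_{1}-z,y_{2}-z)$ and to the pairs $(y_{i}-z,y_{i})$, then taking the minimum), together with $\int_{\RR}p_{t}(z)|z|^{\alpha}\,dz=c_{\alpha}t^{\alpha/2}$, and interpolating this against the uniform-in-$t$ Hölder bound of part (a); the goal is to show that $|y_{1}-y_{2}|^{-\alpha}\bigl|(u^{0}_{t}-u^{0}_{s})(y_{1})-(u^{0}_{t}-u^{0}_{s})(y_{2})\bigr|$ tends to $0$ uniformly over $|y_{i}|\le m$ as $|t-s|\to0$. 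This is the technical heart of the lemma and is exactly where the margin $\beta_{0}<\beta$ and the $\alpha$-Hölder regularity of $F$ must be exploited carefully; everything else is routine heat-kernel bookkeeping, and once this modulus is controlled the lemma follows from the dominated-convergence argument over $m$ together with the identification $u^{0}_{0}=F$.
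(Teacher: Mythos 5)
Your part (a) and the weighted-sup part of your part (b) are correct, and they amount to the same computation as the paper's: the paper writes $u^0_t(y)=\EE F(y-B_t)$ for a Brownian motion $B$ and decomposes the expectation over the Gaussian annuli $\{jm\ga\le|B_t|\le(j+1)m\ga\}$, paying the exponentially growing local H\"older constant $e^{\be_0((j+1)\ga+1)m}$ of $F$ against the Gaussian tail $e^{-j^2m^2\ga^2/4}$ and choosing $\ga$ with $(1+\ga)\be_0\le\be$. Your version integrates $e^{\be_0(m+|z|+1)}$ directly against $p_t(z)$, which is the same mechanism in continuous form and even yields the slightly cleaner constant $Ke^{\be_0 m}$ in place of the paper's $Ke^{(1+\ga)\be_0 m}$. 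The two estimates you obtain this way are exactly the paper's (\ref{est 1}) and (\ref{est 2}) (space-H\"older of order $\al$ uniformly in $t$, time-H\"older of order $\al/2$ of the weighted sup), and those two estimates are the \emph{entire} content of the paper's proof.

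The genuine problem is the step you single out as ``the technical heart'': convergence of $u^0_s\to u^0_t$ in the local $\al$-H\"older \emph{seminorm}, in particular at $t=0$. That statement is false for general $F\in\mathbb{B}_{\al,\be_0}$. Take $F$ behaving like $|x|^{\al}$ near the origin (suitably extended): for every $t>0$ the function $P_tF$ is $C^\infty$, so its $\al$-H\"older ratio based at $0$ tends to $0$ as $|y_1-y_2|\to0$, while that of $F$ equals $1$ there; hence the $\al$-H\"older seminorm of $P_tF-F$ on $[-1,1]$ is bounded below by $1$ for all $t>0$, and there is no seminorm continuity at $t=0$. Your own proposed bound already exhibits the obstruction: the second-difference estimate combined with $\int_{\RR}p_t(z)|z|^{\al}dz=c_{\al}t^{\al/2}$ gives a ratio of order $\min(t^{\al/2},|y_1-y_2|^{\al})/|y_1-y_2|^{\al}$, which is $O(1)$ but not $o(1)$ in the regime $|y_1-y_2|\lesssim\sqrt{t}$; no amount of care with the margin $\be_0<\be$ can repair this, because the target is not true. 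The way out is that this stronger modulus is neither proved nor used by the paper: (\ref{est 1})--(\ref{est 2}) deliver a uniform-in-$t$ bound in $\mathbb{B}_{\al,\be}$ together with joint continuity of $(t,y)\mapsto u^0_t(y)$ (equivalently, $d_{\al',\be}$-continuity of the path for every $\al'<\al$), and that is the sense in which membership in $\mathcal{C}([0,1];\mathbb{B}_{\al,\be})$ is exploited throughout (compare the Arzel\`a--Ascoli tightness argument in Lemma \ref{lem0405a}). So you should stop at the two estimates you already have rather than pursue the $C^{\al}$-seminorm modulus at $t=0$.
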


\begin{proof}
Suppose $t\in [0,1]$ and $y_{1},y_{2}$ are any real numbers such
that $\left|y_{i}\right| \leq m$ for $i=1,2$. Let $B_{t}$ be a
Brownian motion. Then,
\begin{equation*}
u_{t}^{0}(y)= \mathbb{E} F(y-B_{t}).
\end{equation*}
Choosing $\gamma >0$ such that $(1+\gamma) \beta_{0} \leq \beta$ gives,
\begin{eqnarray}\label{est 1}
\left|u_{t}^{0}(y_{1}) - u_{t}^{0}(y_{2}) \right| &\leq& \mathbb{E}
\left|F(y_{1}-B_{t})- F(y_{2} - B_{t})\right| \nonumber\\
&=& \displaystyle \sum_{j=0}^{\infty} \mathbb{E}
\left|F(y_{1}-B_{t})-F(y_{2}-B_{t})\right| 1_{jm\gamma \leq
|B_{t}| \leq (j+1)\gamma {m}}\nonumber\\
&\leq& \displaystyle \sum_{j=0}^{\infty} K e^{\left((j+1)\gamma
+1\right)
\beta_{0}m} |y_{1}-y_{2}|^{\alpha} P(|B_{t}| \geq j\gamma m)\nonumber\\
&\leq& K \displaystyle \sum_{j=0} ^{\infty} e^{(j+1) \gamma
\beta_{0} m - \frac{1}{4} j^{2}m^{2}\gamma^{2} + \beta_{0}m}
|y_{1}-y_{2}|^{\alpha}\nonumber\\
&=& K e^{\beta_{0}m} \displaystyle \sum_{j=0}^{\infty} e^{\gamma
(j+1) \beta_{0} m - \frac{1}{4} j^{2} m^{2} \gamma^{2}}
|y_{1}-y_{2}|^{\alpha}\nonumber\\
&\leq& K e^{(1+\gamma)\beta_{0} m} m \gamma
|y_{1}-y_{2}|^{\alpha}\nonumber\\
&\leq& K e^{m\beta} |y_{1}-y_{2}|^{\alpha}
\end{eqnarray}
On the other hand, let $y$ in $\mathbb{R}$ be fixed such that $|y|\leq m$,
then for any $0<t_{1} \leq t_{2} <1$,
\begin{eqnarray}\label{est 2}
&& \left|u_{t_{1}}^{0}(y) - u_{t_{2}}^{0}(y)\right|\nonumber \\
&\leq& \mathbb{E} \left|F\left(y-B_{t_{1}}\right)-F\left(y-B_{t_{2}}\right)\right| \nonumber\\
&=& \displaystyle \sum_{j_{1},j_{2}} \mathbb{E} \left|F\left(y-B_{t_{1}}\right) - F\left(y-B_{t_{2}}\right)\right| 1_{j_{1}m\gamma \leq |B_{t_{1}}| \leq (j+1)m\gamma} 1_{j_{2}m\gamma \leq |B_{t_{2}}| \leq (j_{2}+1)m\gamma}\nonumber\\
&\leq& \displaystyle \sum_{j_{1},j_{2}} K e^{(m+(j_{1}\vee j_{2} +1)m\gamma)\beta_{0}} \mathbb{E}\left(\left|B_{t_{1}}-B_{t_{2}}\right|^{\alpha} 1_{|B_{t_{1}}| \geq j_{1}m\gamma} 1_{|B_{t_{2}}| \geq j_{2}m\gamma}\right)\nonumber\\
&\leq&  \displaystyle \sum_{j_{1},j_{2}} K e^{(j_{1}\vee j_{2} +1)m\gamma \beta_{0}+m\beta} (\mathbb{E}|B_{t_{1}}-B_{t_{2}}|^{2\alpha})^{\frac{1}{2}} P\left(|B_{t_{1}} \geq j_{1}m\gamma, |B_{t_{2}}| \geq j_{2}m\gamma\right)^{\frac{1}{2}}\nonumber\\
&\leq& \displaystyle \sum_{j_1,j_2}  K e^{((j_1\vee j_2+1)\gamma
+1)\beta_0m}
|t_1-t_2|^{\alpha/2} e^{-\frac{1}{4}m^2\gamma^2(j_1^2 + j_2^2)}\nonumber\\
&\leq& K(m\gamma)^2 e^{\beta_{0}(1+\gamma)m} |t_{1}-t_{2}|^{\alpha/2}\nonumber\\
&\leq& K e^{m\beta} |t_{1}-t_{2}|^{\alpha/2}.
\end{eqnarray}
Estimates (\ref{est 1}) and (\ref{est 2}) imply that
$u^{0}_{\cdot}\in\mathcal{C}([0,1];\mathbb{B}_{\alpha, \beta})$.
\end{proof}

\begin{lemma}\label{lemv}
$v_{\cdot}$ takes values in
$\mathcal{C}\left([0,1];\mathbb{B}_{\alpha,\beta}\right)$, a.s.
\end{lemma}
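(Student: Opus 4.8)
The plan is to mimic the proof of Lemma~\ref{lem0405a} directly on the stochastic convolution $v$. Since $v_0\equiv0$, there is nothing to check at a base point, so it suffices to establish, for some large even $n$, some $\be_1\in(\be_0,\be)$, some $\th\in(0,1)$ and $\th'\in(0,\tfrac12)$, and a constant $K$, the moment bounds
\begin{align*}
&\sup_{t\in[0,1]}\EE\left|v_t(y_1)-v_t(y_2)\right|^n\le Ke^{n\be_1(|y_1|\vee|y_2|)}|y_1-y_2|^{\th n/2},\\
&\EE\left|v_{t_1}(y)-v_{t_2}(y)\right|^n\le Ke^{n\be_1|y|}|t_1-t_2|^{\th' n/2},
\end{align*}
together with $\sup_{t}\EE|v_t(y)|^n\le Ke^{n\be_1|y|}$. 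Granting these: choosing $\th$ close enough to $1$ and $n$ large enough that $\th n/2-1>n\al'$ for some $\al'\in(\al,\tfrac12)$, the spatial estimate and the pointwise bound give --- by the Kolmogorov-type construction in the proof of Lemma~\ref{lem0405a}, whose $e^{-\be m}$-summation over the balls $\{|y|\le m\}$ is powered by the prefactor $e^{n\be_1m}$ with $\be_1<\be$, together with the joint continuity of $v$ in $(t,y)$ that these same estimates provide --- that a.s.\ $v_t\in\BB_{\al',\be}\subset\BB_{\al,\be}$ for every $t$ with $\sup_t\|v_t\|_{m,\al',\be}<\infty$ for all $m$, as well as the growth bound $|v_t(y)|\le Ke^{\be|y|}$ as in \eqref{bdd}. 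The temporal estimate gives that $t\mapsto v_t$ is continuous in the weighted sup-norm on each ball; interpolating this against the uniform-in-$t$ $C^{\al'}$-bound (with $\al<\al'$) yields $d_{\al,\be}$-continuity of $t\mapsto v_t$, so $v_\cdot\in\mathcal C([0,1];\BB_{\al,\be})$ a.s.

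For the moment bounds, write the spatial and temporal increments of $v$ from the mild form \eqref{3.1} as (differences of) stochastic integrals against $W(ds\,da)$ whose integrand at $(s,a)$ equals $\int_\RR\Delta_s(x)G(a,x,u_s(x))\,dx$, with $\Delta_s$ the corresponding difference of heat kernels. Burkholder--Davis--Gundy bounds the $n$-th moment by $\EE$ of the $(n/2)$-th power of $\int_0^1\int_U\bigl(\int_\RR\Delta_s(x)G(a,x,u_s(x))dx\bigr)^2\la(da)\,ds$; Cauchy--Schwarz in $x$ against the weights $e^{\pm\be_1|x|}$, followed by the growth condition \eqref{eq0502b}, dominates this quadratic variation by
\begin{equation*}
K\int_0^1\Bigl(\int_\RR\Delta_s(x)^2e^{2\be_1|x|}dx\Bigr)\Bigl(\int_\RR e^{-2\be_1|x|}(1+|u_s(x)|^2)dx\Bigr)ds .
\end{equation*}
The first factor is deterministic and controlled by standard Gaussian/Plancherel estimates: in the spatial case it is at most $Ce^{2\be_1(|y_1|\vee|y_2|)}|y_1-y_2|^{\th}(t-s)^{-(\th+1)/2}$, and in the temporal case by the analogue with $|t_1-t_2|^{\th'}(t_1-s)^{-\th'-1/2}$, plus the near-diagonal term $\int_{t_1}^{t_2}\int_\RR p_{t_2-s}(y-x)^2e^{2\be_1|x|}dx\,ds\le Ce^{2\be_1|y|}|t_1-t_2|^{1/2}$. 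Because $(\th+1)/2<1$ and $\th'+1/2<1$, the Hölder gains $|y_1-y_2|^{\th}$ and $|t_1-t_2|^{\th'}$ pull out of the $s$-integral, leaving a kernel that is integrable in $s$; Minkowski's integral inequality for the $L^{n/2}(\Om)$-norm then separates the remaining random factor, whose $L^{n/2}$-norm is bounded uniformly in $s$ by Lemma~1 (valid once $n/2\ge2$). Assembling these bounds gives the two displayed estimates, and the same computation with one increment set to zero gives $\sup_t\EE|v_t(y)|^n\le Ke^{n\be_1|y|}$.

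The main obstacle is the weighted heat-kernel increment estimate: one must pull out the Hölder gains $|y_1-y_2|^{\th}$ and $|t_1-t_2|^{\th'}$ while keeping the exponential growth factor clean (this is where the slack $\be_0<\be_1<\be$ is spent --- a slight further enlargement of $\be_1$ toward $\be$ absorbs the Gaussian tails), and simultaneously keep the time singularities $(t-s)^{-(\th+1)/2}$, $(t_1-s)^{-\th'-1/2}$ integrable, which forces $\th<1$, $\th'<\tfrac12$ and hence a large moment $n$. A second point worth recording is that $\mathcal C([0,1];\BB_{\al,\be})$ requires only continuity --- not Hölder continuity --- in $t$; the stochastic convolution is only $(\tfrac14)^-$-Hölder in time, which would be too little for a fully joint Kolmogorov criterion when $\al\ge\tfrac14$, but is amply sufficient for the interpolation step above. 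The remaining ingredients --- Burkholder--Davis--Gundy, Cauchy--Schwarz, Minkowski's inequality, and the use of Lemma~1 --- are routine.
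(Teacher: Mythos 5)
Your proposal is correct and follows essentially the same route as the paper: Burkholder--Davis--Gundy plus a weighted Cauchy--Schwarz reduces the $n$-th moment of the spatial and temporal increments of $v$ to weighted heat-kernel increment integrals times the moment bound of Lemma~1, and the resulting estimates are fed into the Kolmogorov-type criterion of Lemma~\ref{lem0405a} (the paper separates the random factor by H\"older's inequality in $s$ where you use Minkowski's integral inequality, which is equivalent). Your two refinements --- taking the spatial H\"older gain $|y_1-y_2|^{\theta}$ with $\theta$ close to $1$ rather than the paper's $|y_1-y_2|^{\alpha}$, and decoupling the (weaker) temporal regularity from the spatial $C^{\alpha'}$ bound via interpolation instead of a single joint Kolmogorov application --- are sound and in fact handle the full range $\alpha\in(0,\tfrac12)$ more transparently than the paper's bookkeeping does.
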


\begin{proof}
Similar to the proof of Lemma \ref{lem0420a}, two cases are
demonstrated for this lemma. Considering the first case, denote
\begin{equation*}
G:= G(a,x,u_{s}(x)) \mbox{ and } P_{1}:= p_{t-s}(y_{1}-x)-p_{t-s}(y_{2}-x)
\end{equation*}
and let $t\in [0,1]$ be fixed, while $y_{1},y_{2} \in \mathbb{R}$ are arbitrary numbers such that $|y_{i}|\leq m$ for $i=1,2$. Using Burkholder-Davis-Gundy and H$\ddot{o}$lder's inequalities, we obtain,
\begin{eqnarray}\label{3.2}
&&\mathbb{E}\left|v_{t}(y_{1})-v_{t}(y_{2})\right|^{n} \nonumber \\
&=& \mathbb{E}\left|\displaystyle \int_{0}^{t}
\int_{\mathbb{R}}\int_{U} P_{1} G dx W(dsda)\right|^n \nonumber\\
 &\leq& K\mathbb{E}\left|\int_{0}^{t}\int_{U}\left|\int_{\mathbb{R}}
P_{1} G dx\right|^2\lambda(da)ds\right|^{n/2}\\
\nonumber &\leq& K \mathbb{E}\left|\displaystyle \int_{0}^{t}
\int_{U} \int_{\mathbb{R}} \left|P_{1}\right|^{2}
e^{2\beta_{1}|x|}dx \int_{\mathbb{R}}G^{2} e^{-2\beta_{1}|x|} dx
\lambda(da)ds \right|^{n/2}\\ \nonumber &\leq& K \mathbb{E}
\left|\displaystyle \int_{0}^{t} J_{t-s}(y_{1},y_{2}) \displaystyle
\int_{\mathbb{R}}\(1+|u_{s}(x)|^2\) e^{-2\beta_{1}|x|}
dxds\right|^{n/2}
\end{eqnarray}
 where
\begin{equation*}
J_{s}(y_{1},y_{2}) = \displaystyle \int_{\mathbb{R}}
\left|p_{s}(y_{1}-x) - p_{s}(y_{2}-x)\right|^{2} e^{2\beta_{1}
|x|}dx
\end{equation*}
is estimated below using the simplified notation,  \[P_{2} := p_{s}(y_{1}-x)-p_{s}(y_{2}-x).\]
 \begin{eqnarray}\label{eq0502d}
 J_{s}(y_{1},y_{2}) &=& \int_{\mathbb{R}} \left|P_{2}\right|^{\alpha} \left|P_{2}\right|^{2-\alpha} e^{2\beta_{1}|x|}dx \\
&\leq& \int_{\mathbb{R}} \left|\frac{1}{\sqrt{2\pi s}}\right|^{\alpha} \left|\frac{(y_{1}-x)^{2} - (y_{2}-x)^2}{2s}\right|^{\alpha}
 \left|P_{2}\right|^{2-\alpha}e^{2\beta_{1}|x|}dx\nonumber\\
&\leq& K\int_{\mathbb{R}} \left|\frac{1}{\sqrt{2\pi s}}\right|^{\alpha} \frac{|y_{1}-y_{2}|^{\alpha}
 |y_{1}+y_{2}-2x|^{\alpha}}{(2s)^{\alpha}(2\pi s)^{(2-\alpha)/2}} e^{-\frac{(2-\alpha)(y_{1}-x)^2}{2s}} e^{2\beta_{1}|x|} dx \nonumber\\
 && +K\int_{\mathbb{R}} \left|\frac{1}{\sqrt{2\pi s}}\right|^{\alpha} \frac{|y_{1}-y_{2}|^{\alpha}
  |y_{1}+y_{2}-2x|^{\alpha}}{(2s)^{\alpha}(2\pi s)^{(2-\alpha)/2}}  e^{-\frac{(2-\alpha)(y_{2}-x)^{2}}{2s}} e^{2\beta_{1}|x|} dx\nonumber\\
&\leq& K|y_{1}-y_{2}|^{\alpha} s^{-(1+\alpha)}\displaystyle
\int_{\mathbb{R}}  |y_{1}+y_{2}-2x|^{\alpha}
 e^{-\frac{(2-\alpha)(y_{1}-x)^{2}}{2s}}e^{2\beta_{1}|x|}dx\nonumber\\
&& +K|y_{1}-y_{2}|^{\alpha} s^{-(1+\alpha)}\displaystyle
\int_{\mathbb{R}}
 |y_{1}+y_{2}-2x|^{\alpha}
 e^{-\frac{(2-\alpha)(y_{2}-x)^{2}}{2s}}e^{2\beta_{1}|x|}dx\nonumber \\
&\leq& K e^{2\beta_{1}(|y_{1}|\vee |y_{2}|)} s^{-(\frac{1}{2} +
\alpha)} |y_{1}-y_{2}|^{\alpha}.\nonumber
\end{eqnarray}
Note that, we may choose $p>1$ such that $\(\frac12+\al\)p<1$ and
let $q$ be the conjugate index. Then,
\begin{eqnarray}\label{eq0502e}
&&\EE\(\int^t_0(t-s)^{-\(\frac12+\al\)}\int_\RR\(1+|u_s(x)|^2\)e^{-2\be_1|x|}dx
ds\)^{n/2}\nonumber\\
&\le&\(\int^t_0(t-s)^{-\(\frac12+\al\)p}ds\)^{n/(2p)}\EE\(\int^t_0\(\int_\RR\(1+|u_s(x)|^2\)e^{-2\be_1|x|}dx\)^q
ds\)^{n/(2q)}\nonumber\\
&\le&K\EE\int^t_0\(\int_\RR\(1+|u_s(x)|^2\)e^{-2\be_1|x|}dx\)^{n/2}ds\nonumber\\
&\le&K.
\end{eqnarray}
Plugging (\ref{eq0502d}) back into (\ref{3.2}) and noting
(\ref{eq0502e}) to obtain,
\[
\mathbb{E}|v_t(y_1)-v_t(y_2)|^n \le K e^{n\beta_{1}(|y_{1}|\vee
|y_{2}|)} |y_{1}-y_{2}|^{\frac{\alpha n}{2}}.\]

Next to prove case two, let $y \in \mathbb{R}$ and choose any $0\leq t_{1} < t_{2} \leq 1$. Note that
\begin{eqnarray}\label{Is}
&&\mathbb{E}\left|v_{t_{1}}(y)-v_{t_{2}}(y)\right|^n\\
&\leq& K \mathbb{E} \left|\displaystyle \int_{0}^{t_{1}} I_{s}(t_{1},t_{2}) \int_{\mathbb{R}} \(1+|u_{s}(y)|^2\) e^{-2\beta_{1}|x|} dxds\right|^{n/2}\nonumber\\
&& + K \mathbb{E} \left|\displaystyle \int_{t_{1}}^{t_{2}}
\int_{\mathbb{R}} p_{t_{2}-s}^{2} (y-x) e^{2\beta_{1}|x|} dx
\int_{\mathbb{R}} \(1+|u_{s}(x)|^2\)e^{-2\beta_{1}|x|}
dxds\right|^{n/2}\nonumber
\end{eqnarray}
where
\begin{eqnarray*}
I_{s}(t_{1},t_{2})&:=& I^{1}_{s}(t_{1},t_{2}) + I^{2}_{s}(t_{1},t_{2}),\\
I^{i}_{s}(t_{1},t_{2})&:=& \int_{\mathbb{R}}
\left|p_{t_{1}-s}(y-x)-p_{t_{2}-s}(y-x)\right|^{\alpha}p_{t_{i}-s}(y-x)^{2-\alpha}
e^{2\beta_{1}|x|}dx
\end{eqnarray*}
for $i=1,2$. We estimate $I^{1}_{s}(t_{1},t_{2})$ by $K\left(I^{11}_{s}(t_{1},t_{2}) + I^{12}_{s}(t_{1},t_{2})\right)$ where
\begin{equation*}
I^{11}_{s}(t_{1},t_{2}):= \displaystyle \int_{\mathbb{R}}
\left|\frac{1}{\sqrt{t_{1}-s}} -
\frac{1}{\sqrt{t_{2}-s}}\right|^{\alpha}p_{t_{1}-s}(y-x)^{2-\alpha}
e^{2\beta_{1}|x|} dx
\end{equation*}
and
\begin{equation*}
I^{12}_{s}(t_{1},t_{2}):= \displaystyle \int_{\mathbb{R}}
\left|\frac{1}{\sqrt{t_{2}-s}} \left|\frac{1}{t_{1}-s} -
\frac{1}{t_{2}-s}\right|(y-x)^{2}\right|^{\alpha}
p_{t_{1}-s}(y-x)^{2-\alpha}e^{2\beta_{1}|x|}dx
\end{equation*}
Now we continue with
\begin{eqnarray*}
I^{11}_{s}(t_{1},t_{2}) &\leq& K \displaystyle \int_{\mathbb{R}} \left|\frac{t_{2}-t_{1}}{\sqrt{t_{1}-s}(t_{2}-s)} \right|^{\alpha} \frac{e^{2\beta_{1}|x|}}{\sqrt{t_{1}-s}^{1-\alpha}} p_{(t_{1}-s)/(2-\alpha)}(y-x) dx\\
&\leq& K
\frac{|t_{1}-t_{2}|^{\alpha}}{\sqrt{t_{1}-s}(t_{2}-s)^{\alpha}}
e^{2\beta_{1}|y|}
\end{eqnarray*}
and
\begin{eqnarray*}
I^{12}_{s}(t_{1},t_{2})&\leq& K \displaystyle \int_{\mathbb{R}} \frac{|t_{2}-t_{1}|^{\alpha}(y-x)^{2\alpha}}{(t_{2}-s)^{\frac{3\alpha}{2}}(t_{1}-s)^{\frac{1-\alpha}{2}}} p_{(t_{1}-s)/(2-\alpha)}(y-x) e^{2\beta_{1}|x|}dx \\
&\leq& K
\frac{(t_{2}-t_{1})^{\alpha}}{(t_{2}-s)^{\frac{3\alpha}{2}}(t_{1}-s)^{\frac{1-\alpha}{2}}}
e^{2\beta_{1}|y|}
\end{eqnarray*}
Recall $0\leq t_{1}<t_{2}\leq 1$ so for $\alpha \in (0,\frac{1}{2})$,
\begin{eqnarray*}
\displaystyle \int_{0}^{t_{1}} I^{11}_{s}(t_{1},t_{2})ds &\leq& K e^{2\beta_{1}|y|}|t_{1}-t_{2}|^{\alpha}\displaystyle \int_{0}^{t_{1}} (t_{1}-s)^{-(\frac{1}{2}+\alpha)} ds \\
&\leq& K e^{2\beta_{1}|y|} |t_{1}-t_{2}|^{\alpha}
\end{eqnarray*}
and
\begin{eqnarray*}
\displaystyle \int_0^{t_1} I_s^{12}(t_1,t_2)ds &\leq& K
e^{2\beta_1|y|}|t_{1}-t_{2}|^{\alpha}\displaystyle \int_{0}^{t_{1}}
(t_{1}-s)^{-(\frac{1}{2}+\alpha)} ds
\\
&\leq& K e^{2\beta_{1}|y|} |t_{1}-t_{2}|^{\alpha},
\end{eqnarray*}
where we used the fact that $t_{2}-s > t_{1}-s$. Making use of
(\ref{eq0502c}), we see that the first term of (\ref{Is}) can be
estimated above by
\begin{eqnarray*}
K\left(\displaystyle \int_{0}^{t_{1}} \left(I^{11}_{s}(t_{1},t_{2})  +  I^{12}_{s}(t_{1},t_{2}) + I^{21}_{s}(t_{1},t_{2}) + I^{22}_{s}(t_{1},t_{2})\right)ds\right)^{n/2}\\
\leq K e^{n\beta_{1}|y|} |t_{1}-t_{2}|^{\frac{\alpha n}{2}}
\end{eqnarray*}
where $I^{21}$ and $I^{22}$ are defined and estimated similarly as those for $I^{11}$ and $I^{12}$. \\
Finally, we consider the second term of (\ref{Is}). Notice that,
\begin{eqnarray*}
\displaystyle \int_{t_{1}}^{t_{2}} \int_{\mathbb{R}} p_{t_{2}-s}^{2}(y-x) e^{2\beta_{1}|x|} dxds &\leq& K \displaystyle \int_{t_{1}}^{t_{2}} \int_{\mathbb{R}} \frac{e^{2\beta_{1}|x|}}{\sqrt{t_{2}-s}} p_{\frac{1}{2}(t_{2}-s)}(y-x) dxds\\
&\leq& K \displaystyle e^{2\beta_{1}|y|}\int_{t_{1}}^{t_{2}} \frac{ds}{\sqrt{t_{2}-s}} \\
&\leq& K|t_{1}-t_{2}|^{\alpha/2}e^{2\beta_{1}|y|}
\end{eqnarray*}
Thus, we see that the second term of (\ref{Is}) is bounded by
\begin{equation*}
K e^{n\beta_{1}|y|}|t_{1}-t_{2}|^{\frac{n\alpha}{4}}
\end{equation*}
\end{proof}

\section{LDP for SPDE}
LDP describes the asymptotic behavior of the sequence $\{u^{\epsilon}\}$ of the above SPDE as
 $\epsilon \rightarrow 0$. This principle gives the following two bounds.

 LDP Lower bound: For all open sets, $\mathcal{U} \subset\mathcal{C}([0,1]; \mathbb{B}_{\alpha,\beta})$,
 \begin{equation*}
 \displaystyle \liminf_{\epsilon \rightarrow 0} \epsilon \log P(u^{\epsilon} \in \mathcal{U}) \geq -\displaystyle \inf_{x\in \mathcal{U}} I(x)
 \end{equation*}

 LDP Upper bound: For every closed set $C\subset \mathcal{C}([0,1]; \mathbb{B}_{\alpha,\beta})$,
 \begin{equation*}
 \displaystyle \limsup_{\epsilon \rightarrow 0} \epsilon \log P(u^{\epsilon} \in C) \leq -\inf_{x\in C} I(x)
 \end{equation*}
 where $I:\ \mathcal{C}([0,1]; \mathbb{B}_{\alpha,\beta}) \rightarrow [0,\infty]$ is a lower semicontinuous map called a rate function. For more introduction to the theory of large deviations, we refer the reader to \cite{Dem},\cite{Deu} and \cite{Dup}.
In this section, we derive the LDP for SPDE (\ref{2.1}) by using the
powerful technique developed
 by Budhiraja $et\; al$ \cite{BDM}. More specifically, we apply Theorem 6 of that paper
 with $\mathcal{E}_{0} := \mathbb{B}_{\alpha, \beta_{0}}$ and
 $\mathcal{E}:= \mathcal{C}([0,1]; \mathbb{B}_{\alpha,\beta})$.

 Recall from Section 2, the definition of the map $\gamma$ and let $g^{\epsilon}$ be the map
 given in Theorem 1. Denote
 \begin{equation*}
 \mathcal{S}^{N}(\ell_{2}):= \left\{k\in L^{2}([0,1]:\ell_{2}):
 \displaystyle \int_{0}^{1} \|k_s\|^{2}_{\ell_{2}}ds\leq N\right\}
 \end{equation*}
 and define a map $\zeta$ from $k\in\mathcal{S}^{N}(\ell_{2})$ to $h=\ze(k)\in L^{2}([0,1] \times U)$ as follows:
\begin{equation*}
h_{s}(a) =  \sum_{j}  k_{s}^{j} \phi_{j}(a).
\end{equation*}
Let $g^{0}:\mathbb{B}_{\alpha,\beta_{0}} \times S \rightarrow
\mathcal{C}([0,1]:\mathbb{B}_{\alpha,\beta})$ given by,
\begin{equation}
g^{0}\left(F,\displaystyle \int_{0}^{.}k_{s}ds\right) =
\gamma\left(F, \zeta \left(\displaystyle k\right)\right).
\end{equation}
To obtain the LDP, it is sufficient to verify Assumption 2 imposed by
\cite{BDM}. Suppose $\{k^\ep\}$ is a family of random variables taking
values in $\cS^N(\ell_2)$ such that $k^{\epsilon} \rightarrow k$ in
distribution and $F^{\epsilon} \rightarrow F$ as $\ep\to 0$. Denote
the solution to
\begin{eqnarray}\label{me}
u_{t}(y) &=& \displaystyle \int_{\mathbb{R}} p_{t}(y-x)F^{\epsilon}(x)dx +
\theta \displaystyle \sum_{j} \displaystyle \int_{0}^{t} \int_{\mathbb{R}} p_{t-s}(y-x)G_{j}(x,u_s(x)) dB^{j}_{s}dx \nonumber \\
&& + \displaystyle \sum_j\int_{0}^{t}\int_{\mathbb{R}}
p_{t-s}(y-x)G_{j}(x,u_s(x))k_{s}^{\epsilon,j}dx ds
\end{eqnarray}
 as $u_{t}^{\theta,\epsilon}(y)$, where $G_{j}(y,u)$ is defined in Section 2.

\begin{lemma}
$\{u^{\theta,\epsilon}\}$ is tight in $\mathcal{C}\left([0,1];\mathbb{B}_{\alpha,\beta}\right)$. In particular, Assumption 2 of \cite{BDM} holds under the current setup.
\end{lemma}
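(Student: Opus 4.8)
The plan is to establish tightness of $\{u^{\theta,\epsilon}\}$ by verifying the hypotheses of Lemma~\ref{lem0405a}, namely the Kolmogorov-type moment bound (\ref{kolmogorov}) together with the single-point moment bound, uniformly in $\theta\in[0,1]$, $\epsilon>0$, and $k^\epsilon\in\cS^N(\ell_2)$. First I would record the mild-form decomposition of $u^{\theta,\epsilon}_t(y)$ into three pieces: the deterministic heat-flow term with initial data $F^\epsilon$, the stochastic convolution term (carrying the factor $\theta$), and the drift-control term (carrying $k^{\epsilon,j}_s$). The first term is handled exactly as in Lemma~\ref{lem0420a} (using that $F^\epsilon\to F$ in $\BB_{\al,\be_0}$, so the bounds (\ref{eq3})–(\ref{eq0405a}) hold with a uniform constant $K$); the second term is handled exactly as in Lemma~\ref{lemv}, since $\theta\le 1$ only helps. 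So the genuinely new work is the drift-control term
\[
w^{\theta,\epsilon}_t(y)=\sum_j\int_0^t\int_\RR p_{t-s}(y-x)G_j(x,u^{\theta,\epsilon}_s(x))k^{\epsilon,j}_s\,dx\,ds.
\]

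The key step is to bound the $n$-th moment of increments of $w^{\theta,\epsilon}$. For the spatial increment, writing $P_1:=p_{t-s}(y_1-x)-p_{t-s}(y_2-x)$, I would use Cauchy–Schwarz in $j$ (Parseval: $\sum_j G_j(x,u)^2=\int_U G(a,x,u)^2\la(da)\le K(1+|u|^2)$ by (\ref{eq0502b})) together with Cauchy–Schwarz in $(s,a)$ against $\int_0^1\|k^\epsilon_s\|_{\ell_2}^2ds\le N$, to get
\[
|w^{\theta,\epsilon}_t(y_1)-w^{\theta,\epsilon}_t(y_2)|^2
\le N\int_0^t\Big(\int_\RR |P_1|\,e^{\be_1|x|}\big(1+|u^{\theta,\epsilon}_s(x)|\big)e^{-\be_1|x|}e^{\be_1|x|}\,dx\Big)^2\!\cdot(\cdots)\,ds,
\]
and then the same splitting $|P_1|=|P_1|^\al|P_1|^{2-\al}$ and heat-kernel estimate used to derive (\ref{eq0502d}), combined with Lemma~\ref{lem0405a}'s input (\ref{eq0502c}) to control $\int_\RR(1+|u^{\theta,\epsilon}_s(x)|^2)e^{-2\be_1|x|}dx$ in $L^{n/2}$. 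The extra $ds$-integration against an $L^1$ weight in $s$ (from the $s^{-(1/2+\al)}$ singularity, which is integrable since $\al<1/2$) and Hölder in $s$ as in (\ref{eq0502e}) yields the bound $Ke^{n\be_1(|y_1|\vee|y_2|)}|y_1-y_2|^{\al n/2}$, uniformly in $\theta,\epsilon,k^\epsilon$. The time increment is handled by the same two-region split as in (\ref{Is})–(\ref{eq0502d}) of Lemma~\ref{lemv}: the region $s\in[0,t_1]$ using the $I^{ij}_s(t_1,t_2)$ estimates, and $s\in[t_1,t_2]$ using $\int_\RR p_{t_2-s}^2(y-x)e^{2\be_1|x|}dx$; carrying the control variable through by Cauchy–Schwarz exactly as above gives $Ke^{n\be_1|y|}|t_1-t_2|^{\al n/4}$. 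Adding the three terms and choosing $n$ large enough that $\al n/4>2$ (e.g. $\al n/4=2+q$ for some $q>0$), one obtains (\ref{kolmogorov}); the single-point moment bound $\sup\EE|u^{\theta,\epsilon}_{t_0}(y_0)|^n<\infty$ follows from the same three estimates applied at a fixed point (or directly from an analogue of Lemma~1). Then Lemma~\ref{lem0405a} gives tightness in $\cC([0,1];\BB_{\al,\be})$.

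The main obstacle is making all constants genuinely uniform in $\theta\in[0,1]$, $\epsilon>0$, and over the whole ball $\cS^N(\ell_2)$ of controls: one must be careful that the moment bound (\ref{eq0502c}) of Lemma~1 — which was stated for the uncontrolled SPDE — actually holds for $u^{\theta,\epsilon}$ with a bound independent of $\theta,\epsilon,k^\epsilon$. This requires revisiting that estimate: the drift-control term contributes $\int_0^t\int_\RR(\int_\RR p_{t-s}(y-x)(\cdots)dx)(\cdots)$ which, after Cauchy–Schwarz against $\|k^\epsilon_s\|_{\ell_2}^2$ and Gronwall in $\sup_{s\le t}\EE(\int|u_s|^2e^{-2\be_1|x|}dx)^n$, still closes because the extra factor is $N$, a fixed constant. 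Once that uniform a~priori bound is in hand, the rest is a direct adaptation of Lemmas~\ref{lem0420a} and \ref{lemv}, and the final sentence — that Assumption~2 of \cite{BDM} holds — follows since tightness of $\{u^{\theta,\epsilon}\}$ together with the measurability and consistency of $g^\epsilon$ and $g^0$ established via Theorem~\ref{thm1} is precisely the content of that assumption (the convergence-in-distribution half being a standard consequence of tightness plus identification of limit points as solutions of the controlled equation (\ref{me}) with $\theta=0$).
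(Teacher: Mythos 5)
Your proposal follows essentially the same route as the paper: reduce to the drift-control term $w^{\theta,\epsilon}$, bound it via Cauchy--Schwarz in $j$ (using $\sum_j G_j^2=\int_U G^2\,\lambda(da)\le K(1+|u|^2)$) and against $\int_0^1\|k^\epsilon_s\|_{\ell_2}^2ds\le N$, reuse the heat-kernel increment estimates of Lemma~\ref{lemv}, and conclude by the Kolmogorov-type criterion of Lemma~\ref{lem0405a}, with Assumption~2 of \cite{BDM} obtained from the cases $\theta=0$ and $\theta=\sqrt{\epsilon}$. Your additional remark that the a priori moment bound (\ref{eq0502c}) must be re-derived for the controlled equation uniformly over $\mathcal{S}^N(\ell_2)$ (closing via Gr\"onwall with the extra factor $N$) is a point the paper passes over silently, and is a correct and worthwhile refinement rather than a deviation.
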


\begin{proof}
 To prove the tightness of $\{u^{\theta,\epsilon}\}$, we need to determine estimates for $u^{\theta,\epsilon}$ similar to those obtained in Section 3. Since the main difference is in the last term, we restrict our attention to
\begin{equation*}
w_{t}(y):= \sum_j\int_{0}^{t}\int_{\mathbb{R}}
p_{t-s}(y-x)G_{j}(x,u_s(x))k_{s}^{\epsilon,j}dx ds.
\end{equation*}
Using $P_{1}:= p_{t-s}(y_{1}-x)-p_{t-s}(y_{2}-x)$,
\begin{eqnarray*}
\mathbb{E}\left|w_{t}(y_{1})-w_{t}(y_{2})\right|^{n} &=&
\mathbb{E}\left|\int_{0}^{t}\int_{\mathbb{R}}
P_{1}\sum_{j}G_{j}(x,u_{s}(x))k_{s}^{\epsilon,j}dxds\right|^{n}\\
&\leq& \mathbb{E}\left|\int_{0}^{t}\int_{\mathbb{R}}|P_1|
\left(\sum_{j}G_{j}(x,u_{s}(x))^{2}\right)^{1/2}\|k_{s}^{\epsilon}\|_{\ell_{2}}dxds\right|^{n}\\
&\leq& \mathbb{E} \left|\int_{0}^{t}\left(\int_{\mathbb{R}}
|P_{1}|\sqrt{K\(1+|u_{s}(x)|^2\)} dx\right)^{2}ds\right|^{n/2}N^{n/2} \\
&\leq& MN^{n/2}\( \int_{0}^{t}\int_{\mathbb{R}}|P_{1}|^2e^{2\beta_{1}|x|}dxds\)^{n/2}\\
&\leq& Ke^{n\beta_{1}(|y_{1}|\vee |y_{2}|)}|y_{1}-y_{2}|^{\alpha}
\end{eqnarray*}
where the last step follows from an analogous argument as in the
proof of Lemma \ref{lemv} and $M$ is given by (\ref{eq0502c}). The
estimate for fixed $y$ and $t_1,\; t_2$ arbitrary can be derived
similarly. Now the first condition in Assumption 2 follows from the
above argument by taking $\theta = 0$, while the second condition is
verified by taking $\theta = \sqrt{\epsilon}$.
\end{proof}

 Suppose $u^{0,0}$ is a limit point of $\{u^{\theta,\epsilon}\}$ as $\theta,\epsilon \rightarrow 0$. By taking limits on both sides of (\ref{me}),  $u^{0,0}$ becomes a solution to the following equation,
\begin{eqnarray*}
u_{t}^{0,0}(y) &=&  \int_{\mathbb{R}} p_{t}(y-x)F(x)dx\\
&& +\sum_j \displaystyle \int_{0}^{t}\int_{\mathbb{R}}
p_{t-s}(y-x)G_{j}(x,u^{0,0}_s(x)) k^j_{s}dxds\nonumber\\
&=&\int_{\mathbb{R}} p_{t}(y-x)F(x)dx \\
&&+ \displaystyle \int_{0}^{t}\int_{\mathbb{R}} \int_{U}
p_{t-s}(y-x)G(a,x,u^{0,0}_s(x)) h_{s}(a)\lambda(da)dx ds
\end{eqnarray*}
which is the mild form of (\ref{2.6}), where $h=\zeta(k)$. The
definition of $\gamma$ implies $u^{0,0}=\gamma(F,h)$. Thus, using
the above lemma to apply Theorem 6 in [1], the rate function for
SPDE (\ref{2.1}) is given as,
\begin{equation*}
\tilde{I}(u) =\left\{\begin{array}{ll}  \frac{1}{2} \inf\left\{ \displaystyle
\int_{0}^{1}  \|k_{s}\|_{\ell_{2}}^{2} ds : u = \gamma\left(F,\zeta(k)\right)\right\}
 &\exists k\; \mbox{s.t.} u=\gamma\left(F,\zeta(k)\right)\\
\infty &\mbox{otherwise.}
\end{array}\right.
\end{equation*}

By the relationship between $k$ and $h$, it is easy to see that
$\tilde{I}$ coincides with rate function $I$ defined by
(\ref{2.7}). This concludes the proof of Theorem 2.

Function $G(a,x,u)$ for SBM and FVP satisfies conditions (\ref{eq0502a}) and
(\ref{eq0502b}); hence, by the results attained in this section, to establish
the LDP for SBM and FVP, one needs only to determine the corresponding
rate functions. The identification of these rate functions is given
in Sections 5 and 6 respectively.

\section{LDP for super-Brownian Motion}

SBM is one of the main models in studying the
evolution of populations. It assumes that each individual moves
randomly according to a Brownian motion and she leaves a random
number of offsprings upon her death. Therefore, SBM is a measure-valued process with an associated branching
rate, $\epsilon$. Formally speaking, this measure-valued process,
also referred to as a superprocess, is defined as the unique
solution, $\mu_{t}^{\epsilon}$, to the following martingale problem:
for all $f\in \mathcal{C}_{b}^{2}(\mathbb{R})$,
\begin{equation*}
M_{t}(f):= \left<\mu_{t}^{\epsilon},f\right> -
\left<\mu_{0},f\right> - \displaystyle \int_{0}^{t}
\left<\mu_{s}^{\epsilon},\frac{1}{2} \Delta f\right> ds,
\end{equation*}
is a square-integrable martingale with quadratic variation,
\begin{equation*}
\left<M_{t}(f)\right> = {\epsilon}\displaystyle \int_{0}^{t}
\left<\mu_{s}^{\epsilon},f^2\right> ds.
\end{equation*}
For more information on this superprocess see \cite{Eth}
and \cite{Li}.
 Our aim in this section is to prove the LDP for
SBM as the branching rate $\epsilon$ is set to
converge to zero.
We define
\begin{equation}\label{J}
J_{\beta}(x)=\int_{\mathbb{R}}e^{-\beta|y|}\rho(x-y)dy
\end{equation}
where $\rho$ is the mollifier given by
\begin{equation*}
\rho(x)=K \exp\left(\frac{-1}{1-x^2}\right)1_{|x|<1}
\end{equation*}
and $K$ is a constant such that $\int_{\mathbb{R}} \rho(x)dx=1$.
Then for all $m\in \mathbb{Z}_+$, there are constants $c_{m},C_{m}$
such that
\begin{equation*}
c_{m}e^{-\beta|x|} \leq J_{\beta}^{(m)}(x) \leq C_{m}e^{-\beta |x|}\hspace{.5cm} \forall x\in \mathbb{R}
\end{equation*}
(cf. Mitoma \cite{Mit}, (2.1)). Therefore, we may and will replace $e^{-\beta|x|}$ by $J_{\beta}(x)$ in the definition of $\mathcal{M}_{\beta}(\mathbb{R})$ given by (\ref{e}).

\begin{lemma}\label{lem0420b}
 Let $\mathcal{A}$ be the set of all nondecreasing functions, then the map $\xi: \mathbb{B}_{\alpha,\beta} \cap \mathcal{A} \rightarrow \mathcal{M}_{\beta}(\mathbb{R})$ defined as $\xi(u)(A)= \int 1_A (y) du(y)$ for all $A\in \mathcal{B}(\mathbb{R})$, is continuous.
 \end{lemma}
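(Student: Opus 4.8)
The plan is to verify sequential continuity: given $u_n\to u$ in $\BB_{\al,\be}\cap\cA$, I will show $\xi(u_n)\to\xi(u)$ in $\cM_\be(\RR)$. The useful feature of the domain metric is that $\sup_{x\in\RR}e^{-\be|x|}|u_n(x)-u(x)|$ occurs undiscounted in $\|u_n-u\|_{m,\al,\be}$ for every $m$, so $d_{\al,\be}(u_n,u)\to0$ forces $\de_n:=\sup_{x\in\RR}e^{-\be|x|}|u_n(x)-u(x)|\to0$; in particular $\sup_{|x|\le R}|u_n(x)-u(x)|\le e^{\be R}\de_n\to0$ for each $R>0$, i.e. $u_n\to u$ uniformly on bounded intervals. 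Moreover every $w\in\BB_{\al,\be}\cap\cA$ is continuous (being locally $\al$-H\"older) and nondecreasing, so $\xi(w)$ is a genuine nonnegative Radon measure with $\xi(w)((a,b])=w(b)-w(a)$.

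The first step is to check convergence tested against $C_c(\RR)$. Fix $R>0$ and $g\in C([-R,R])$, and approximate $g$ uniformly on $[-R,R]$ by a step function $\varphi=\sum_i c_i 1_{(a_i,b_i]}$. Since $u_n\to u$ uniformly on $[-R,R]$, the masses $\xi(u_n)([-R,R])=u_n(R)-u_n(-R)$ are bounded uniformly in $n$, so $|\int g\,d\xi(u_n)-\int\varphi\,d\xi(u_n)|\le\|g-\varphi\|_{\infty,[-R,R]}\sup_n\xi(u_n)([-R,R])$ is arbitrarily small uniformly in $n$, while $\int\varphi\,d\xi(u_n)=\sum_i c_i(u_n(b_i)-u_n(a_i))\to\sum_i c_i(u(b_i)-u(a_i))=\int\varphi\,d\xi(u)$ by pointwise convergence. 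Letting $\varphi\to g$ gives $\int g\,d\xi(u_n)\to\int g\,d\xi(u)$ for all $g\in C_c(\RR)$; multiplying through by the continuous weight $e^{-\be|\cdot|}$, this is exactly $e^{-\be|\cdot|}\xi(u_n)\to e^{-\be|\cdot|}\xi(u)$ in the vague topology.

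The remaining step, which I expect to be the main obstacle, is to promote this to the topology of $\cM_\be(\RR)$, i.e. to convergence of $\int f(x)e^{-\be|x|}\xi(u_n)(dx)$ for every $f\in C_b(\RR)$; by the remark preceding the lemma $e^{-\be|x|}$ may be replaced by the smooth weight $J_\be$. Splitting the integral at $|x|=R$, the part over $\{|x|\le R\}$ is handled by the first step, and it remains to show $\sup_n\int_{\{|x|>R\}}J_\be\,d\xi(u_n)\to0$ as $R\to\infty$ --- i.e. uniform integrability of the tails of $\{e^{-\be|\cdot|}\xi(u_n)\}$. For the right tail I would integrate by parts, $\int_R^\infty J_\be\,du_n=[J_\be u_n]_R^\infty-\int_R^\infty J_\be'(x)u_n(x)\,dx$, the boundary term at $+\infty$ vanishing because $J_\be(x)\le C_0 e^{-\be|x|}$ while $e^{-\be x}u_n(x)\to0$ as $x\to\infty$ (a consequence of $\xi(u_n)\in\cM_\be(\RR)$ and monotonicity of $u_n$), and then bound $|J_\be'(x)|\le C_1 e^{-\be|x|}$ and $|u_n(x)|\le|u(x)|+e^{\be|x|}\de_n$. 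The contribution of $|u(x)|$ is dominated by $\int_{\{|x|>R\}}e^{-\be|x|}|u(x)|\,dx$, which is finite because $\xi(u)\in\cM_\be(\RR)$ and hence small for $R$ large; the delicate term is the one proportional to $\de_n$, which is uniformly small only when one additionally controls $\sup_n\int_\RR e^{-\be|x|}\xi(u_n)(dx)$ and the tightness of $\{e^{-\be|\cdot|}\xi(u_n)\}$. In the use made of this lemma --- transporting the LDP for $u^\ep$ to $\mu^\ep$ by the contraction principle, so that the sequences in play range over images of compact level sets of the rate function --- these uniform bounds are supplied by the a priori estimates of Section 3, cf. (\ref{eq0502c}) and (\ref{bdd}); with them in hand the two pieces of the splitting recombine and the continuity of $\xi$ follows.
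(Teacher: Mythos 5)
Your route is genuinely different from the paper's. The paper proves the lemma in essentially one line: it integrates by parts against the smooth equivalent weight, writing $\int f(x) J_\be(x)\,\xi(u_n)(dx) = -\int (fJ_\be)'(x)\,u_n(x)\,dx$, and then passes to the limit in the resulting Lebesgue integral using $\de_n:=\sup_x e^{-\be|x|}|u_n(x)-u(x)|\to 0$ together with Mitoma's bound on the derivatives of $J_\be$; there is no separate vague-convergence step and no explicit tail decomposition. Your first step (locally uniform convergence of $u_n$, hence convergence of $\int g\,d\xi(u_n)$ for $g\in C_c(\RR)$) is correct and unobjectionable.

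The gap is in your second step, and the way you propose to close it is not admissible. As you yourself observe, after integrating by parts on $\{|x|>R\}$ the term proportional to $\de_n$ is controlled only by $\de_n\int_{|x|>R} C e^{-\be|x|}e^{\be|x|}\,dx=\infty\cdot\de_n$, because convergence in $\BB_{\al,\be}$ bounds $|u_n-u|$ exactly at the critical rate $e^{\be|x|}$ against the weight $e^{-\be|x|}$; and the ``tightness of $\{e^{-\be|\cdot|}\xi(u_n)\}$'' you then invoke is precisely the uniform tail bound you are trying to establish, so the argument is circular at that point. Importing (\ref{eq0502c}) and (\ref{bdd}) does not repair this: those are moment estimates on the particular random fields $u^\ep$, whereas the lemma asserts continuity of the deterministic map $\xi$ at every point of $\BB_{\al,\be}\cap\cA$ --- which is what the contraction principle in Section 5 consumes --- so one cannot restrict attention to sequences carrying extra probabilistic bounds without reformulating that step. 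To be fair, the difficulty you isolate is real and is present, in disguised form, in the paper's own limit passage (the pointwise bound $|(fJ_\be)'(x)||u_n(x)-u(x)|\le C\de_n$ is not integrable over $\RR$); in either approach the clean resolution is a strict gap between the growth exponent of the functions and the decay exponent of the weight --- e.g.\ working on $\BB_{\al,\be_1}\cap\cA$ for some $\be_1<\be$, where the solutions actually live by the estimates of Section 3, so that $\int |(fJ_\be)'(x)|\,|u_n(x)-u(x)|\,dx\le \big(\sup_x e^{-\be_1|x|}|u_n(x)-u(x)|\big)\int Ce^{-(\be-\be_1)|x|}\,dx\to 0$ --- not the moment bounds themselves.
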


 \begin{proof}
 Suppose $u_{n} \rightarrow u$ in the space $\mathbb{B}_{\alpha,\beta} \cap \mathcal{A}$. Then for every $f\in \mathcal{C}_{b}(\mathbb{R})$,
 \begin{eqnarray*}
 \int f(x)J_{\beta}(x)\xi(u_n)dx &=& \int f(x)J_{\beta}(x)du_{n}(x)\\
 &=& -\int (fJ_{\beta})'(x)u_{n}(x)dx\\
 &\rightarrow& -\int (fJ_{\beta})'(x)u(x)dx\\
 &=& \int fJ_{\beta}(x)\xi(u)(dx)
 \end{eqnarray*}
  verifying the continuity of $\xi$ map.
 \end{proof}
 $\emph{Proof of Theorem 3}$ Recall the definition of $u^{\epsilon}$ given by (\ref{SBME}).
 By Theorem 2, $u^{\epsilon}$ satisfies the LDP on $\mathcal{C}([0,1]; \mathbb{B}_{\alpha, \beta})$
  and because $u_{t}^{\epsilon} \in \mathcal{A}$ a.s. for all $t$, we see that $u^{\epsilon}$ obeys LDP on
  $\mathcal{C}([0,1]; \mathbb{B}_{\alpha, \beta}\cap \mathcal{A})$, as well.
  Since for SBM, $\mu_{t}^{\epsilon} = \xi(u_{t}^{\epsilon})$, then
  by Lemma \ref{lem0420b} and the contraction principle, LDP holds for $\mu^{\epsilon}$ on
  $\mathcal{C}([0,1];\mathcal{M}_{\beta}(\mathbb{R}))$ with the rate function determined below.

If $I(\mu)<\infty$, then there exists
$h\in L^2([0,1]\times \mathbb{R}_{+}, ds da)$ such that (\ref{2.6})
holds. Let $\mathcal{C}_{c}(\mathbb{R})$ be the collection of functions with compact support on $\mathbb{R}$, then for $f\in \mathcal{C}_{c}^{1}(\mathbb{R})$,
\begin{equation*}
\<\mu_{t},f\> = - \< u_{t}, f'\>_{L^{2}(\mathbb{R})}.
\end{equation*}
Using the controlled version SPDE (\ref{2.6}), for every $f\in \mathcal{C}_{c}^{3}(\mathbb{R})$,
\begin{eqnarray*}
\<\mu_t,f\>&=&-\<F,f'\>-\int^t_0\<u'_s,\frac{1}{2}
\Delta f\>ds\\
&& \hspace{.3cm} -\int^t_0\int_{\mathbb{R}}\int_{-\infty}^{u_{s}(y)}
h_{s}(a)da f'(y)dy ds\\
&=&\<\mu_0,f\>+\int^t_0\<\mu_s,\frac12\De
f\>ds+\int^t_0\int_{\mathbb{R}}
f(u^{-1}_s(a))h_s(a) da  ds\\
&=&\<\mu_0,f\>+\int^t_0\<\frac12\De^*
\mu_s,f\>ds+\int^t_0\int_{\mathbb{R}}
f(y)h_s(u_s(y))du_s(y) ds\\
&=&\<\mu_0,f\>+\int^t_0\<\frac12\De^{*}\mu_s,
f\>ds+\int^t_0\<\mu_s,fh_s(u_s)\>ds
\end{eqnarray*}
which implies $\mu\in H_{\mu_0}$ and
\begin{equation*}
\frac{\left(\dot{\mu_{t}}- \frac{1}{2} \Delta^{*}
\mu_{t}\right)(dy)}{\mu_{t}(dy)} = h_{t}(u_{t}(y)).
\end{equation*}
Moreover,
\begin{equation*}
\int_{\mathbb{R}} \left|h_{t}(u_{t}(y))\right|^{2} \mu_{t}(dy) =
\int_{\mathbb{R}} \left|h_{t}(u_{t}(y))\right|^{2} du_{t}(y) =
\int_{\mathbb{R}} \left|h_{t}(a)\right|^{2} da.
\end{equation*}
Denote the right hand side of (\ref{rate4sbm}) by $I_0(\mu)$ and observe that in this case, $I_0(\mu)=I(\mu)$.

If $I_0(\mu)<\infty$, we may reverse the above
calculation to obtain the finiteness of $I(\mu)$. This completes the
proof of Theorem 3.

\section{LDP for Fleming-Viot}

Besides SBM, FVP is another important model used in population
evolution. In this model population size stays fixed throughout time
and the gene mutation and selection of individuals are observed. A
rigorous definition for Fleming-Viot process is a probability
measure-valued process $\mu_{t}^{\epsilon}$ solving the following
martingale problem: for all $f\in \mathcal{C}_{c}^{2}(\mathbb{R})$,
\begin{equation*}
N_{t}(f):=\left<\mu_{t}^{\epsilon},f\right> - \left<\mu_{0},f\right>
- \displaystyle \int_{0}^{t} \left<\mu_{s}^{\epsilon},\frac{1}{2}
\Delta f\right> ds
\end{equation*}
is a continuous square-integrable martingale with quadratic
variation,
\begin{equation*}
\left<N_{t}(f)\right> = {\epsilon}\displaystyle \int_{0}^{t}
\left(\left<\mu_{s}^{\epsilon}, f^{2}\right> -
\left<\mu_{s}^{\epsilon}, f\right>^{2} \right)ds.
\end{equation*}
More detailed material on Fleming-Viot process can be found in
\cite{Daw} and \cite{Eth}.
This section derives the LDP for
Fleming-Viot process as its mutation rate, $\epsilon$ is set to
converge to zero.

$\emph{Proof of Theorem 4}$ Using the same argument as in Section 5,
 Lemma \ref{lem0420b} can be proven for FVP by defining a map $\psi: \mathbb{B}_{\alpha,\beta} \cap \mathcal{A} \rightarrow \mathcal{P}_{\beta}(\mathbb{R})$ defined as
  $\psi(u)((-\infty,y])= \int_{-\infty}^{y}du(y)$. The continuity of this map can be easily verified following the same steps as in
  Lemma \ref{lem0420b}; therefore, we proceed by identifying the rate function.

If $I(\mu)<\infty$, there exists
$h\in L^2([0,1]\times\mathbb{R}_{+}, ds da)$ such that (\ref{2.6})
holds. For $f\in \mathcal{C}_{c}^{3}(\mathbb{R})$,
\begin{eqnarray*}
\<\mu_t,f\>&=&-\<F,f'\>-\int^t_0\<u'_s,\frac12
\Delta f\>ds\\
&& \hspace{.3cm} - \int^t_0\int_{\mathbb{R}}\int_{0}^{u_{s}(y)}
h_{s}(a)da f'(y)dy ds - \displaystyle \int_{0}^{t}\<\mu_{s},f\>\displaystyle \int_{0}^{1}h_{s}(a)dads\notag\\
&=&\<\mu_0,f\>+\int^t_0\<\mu_s,\frac12\De
f\>ds+\int^t_0\int_{\mathbb{R}}
f(u^{-1}_s(a))h_s(a) dads\notag\\
&& \hspace{.3cm} - \displaystyle \int_{0}^{t}\<\mu_{s},f\>\displaystyle \int_{0}^{1}h_{s}(a)dads\notag\\
&=&\<\mu_0,f\>+\int^t_0\<\frac12\De^{*} \mu_s,
f\>ds+\int^t_0\int_{\mathbb{R}}
f(y)h_s(u_s(y))du_s(y) ds\notag\\
&& \hspace{.3cm} - \displaystyle
\int_{0}^{t}\<\mu_{s},f\>\displaystyle \int_{0}^{1}h_{s}(a)dads\\
&=&\<\mu_0,f\>+\int^t_0\<\frac12\De^{*}\mu_s,f\>ds+\int^t_0\<\mu_s,fh_s(u_s)\>ds\notag\\
&& \hspace{.3cm} -\displaystyle
\int_{0}^{t}\<\mu_{s},f\>\displaystyle \int_{0}^{1}h_{s}(a)dads
\end{eqnarray*}
hence, $\mu\in{H}_{\mu_0}$ and
\begin{equation*}
\frac{\left(\dot{\mu_{t}}- \frac{1}{2} \Delta^{*}
\mu_{t}\right)(dy)}{\mu_{t}(dy)} = h_{t}(u_{t}(y)) - \displaystyle
\int_{0}^{1}h_{t}(a)da.
\end{equation*}
If $h$ satisfies (\ref{2.6}) then $\bar{h}_{s}(a)
\equiv h_{s}(a) - \displaystyle \int_{0}^{1}h_{s}(a)da$ also
satisfies the same equation. To minimize $\displaystyle \int_{0}^{1}
\left|h_{s}(a)\right|^2da$, we choose $h$ such that $\displaystyle
\int_{0}^{1}h_{s}(a)da = 0$. Therefore, $\mu \in \tilde{H}_{\mu_{0}}$ and
\begin{equation*}
\frac{\left(\dot{\mu_{t}}- \frac{1}{2} \Delta^{*}
\mu_{t}\right)(dy)}{\mu_{t}(dy)} = h_{t}(u_{t}(y)).
\end{equation*}
Applying the same argument as in Section 5 establishes Theorem 4.

 \end{document}